\newcommand{\modcat}{{\ref{ch:model:eq:ISO3}}}
\definecolor{babyblue}{rgb}{0.54, 0.81, 0.94}
\definecolor{emerald}{rgb}{0.31, 0.78, 0.47}
\definecolor{celestialblue}{rgb}{0.29, 0.59, 0.82}
\begin{document}

\title*{$p$-Wasserstein distances on networks and 3D to 1D convergence}
\author{Martin Burger\orcidID{0000-0003-2619-2912} and\\ Ariane Fazeny\orcidID{0009-0002-0008-3646} and\\ Gilles Mordant\orcidID{0000-0003-1821-5168} and\\ Jan-Frederik Pietschmann\orcidID{0000-0003-0383-8696}}
\institute{Martin Burger \at Helmholtz Imaging, Deutsches Elektronen-Synchrotron DESY, Notkestr. 85, 22607 Hamburg, Germany. \email{martin.burger@desy.de} \newline Fachbereich Mathematik, Universit\"at Hamburg, Bundesstrasse
55, 20146 Hamburg, Germany. \email{martin.burger@uni-hamburg.de}
\and Ariane Fazeny \at Helmholtz Imaging, Deutsches Elektronen-Synchrotron DESY, Notkestr. 85, 22607 Hamburg, Germany. \email{ariane.fazeny@desy.de}
\and
Gilles Mordant \at Applied and Computational Mathematics, Yale University. \email{gilles.mordant@yale.edu}
\and Jan-Frederik Pietschmann \at \small
Universit\"{a}t Augsburg, Institut f\"ur Mathematik and
Centre for Advanced Analytics and Predictive Sciences (CAAPS), Universit\"{a}tsstra\ss e 12a, 86159 Augsburg, Germany.
\email{jan-f.pietschmann@uni-a.de}}
%
%
\maketitle

\abstract*{Each chapter should be preceded by an abstract (no more than 200 words) that summarizes the content. The abstract will appear \textit{online} at \url{www.SpringerLink.com} and be available with unrestricted access. This allows unregistered users to read the abstract as a teaser for the complete chapter.
Please use the 'starred' version of the \texttt{abstract} command for typesetting the text of the online abstracts (cf. source file of this chapter template \texttt{abstract}) and include them with the source files of your manuscript. Use the plain \texttt{abstract} command if the abstract is also to appear in the printed version of the book.}

\abstract{
We study transport distances on metric graphs representing gas networks. Starting from the dynamic formulation of the Wasserstein distance, we review extensions to networks, with and without the possibility of storing mass on the vertices.
Next, we examine the asymptotic behavior of the static Wasserstein distance on a three-dimensional network domain that converges to a metric graph. We show convergence of the distance with a proof that is based on the characterization of optimal transport plans as $c$-cyclically monotone sets.
We conclude by illustrating our finding with several numerical examples.}

\def\calA{{\mathcal A}} \def\calB{{\mathcal B}} \def\calC{{\mathcal C}}
\def\calD{{\mathcal D}} \def\calE{{\mathcal E}} \def\calF{{\mathcal F}}
\def\calG{{\mathcal G}} \def\calH{{\mathcal H}} \def\calI{{\mathcal I}}
\def\calJ{{\mathcal J}} \def\calK{{\mathcal K}} \def\calL{{\mathcal L}}
\def\calM{{\mathcal M}} \def\calN{{\mathcal N}} \def\calO{{\mathcal O}}
\def\calP{{\mathcal P}} \def\calQ{{\mathcal Q}} \def\calR{{\mathcal R}}
\def\calS{{\mathcal S}} \def\calT{{\mathcal T}} \def\calU{{\mathcal U}}
\def\calV{{\mathcal V}} \def\calW{{\mathcal W}} \def\calX{{\mathcal X}}
\def\calY{{\mathcal Y}} \def\calZ{{\mathcal Z}}
\def\rma{{\mathrm a}} \def\rmb{{\mathrm b}} \def\rmc{{\mathrm c}}
\def\rmd{{\mathrm d}} \def\rme{{\mathrm e}} \def\rmf{{\mathrm f}}
\def\rmg{{\mathrm g}} \def\rmh{{\mathrm h}} \def\rmi{{\mathrm i}}
\def\rmj{{\mathrm j}} \def\rmk{{\mathrm k}} \def\rml{{\mathrm l}}
\def\rmm{{\mathrm m}} \def\rmn{{\mathrm n}} \def\rmo{{\mathrm o}}
\def\rmp{{\mathrm p}} \def\rmq{{\mathrm q}} \def\rmr{{\mathrm r}}
\def\rms{{\mathrm s}} \def\rmt{{\mathrm t}} \def\rmu{{\mathrm u}}
\def\rmv{{\mathrm v}} \def\rmw{{\mathrm w}} \def\rmx{{\mathrm x}}
\def\rmy{{\mathrm y}} \def\rmz{{\mathrm z}}
\def\rmA{{\mathrm A}} \def\rmB{{\mathrm B}} \def\rmC{{\mathrm C}}
\def\rmD{{\mathrm D}} \def\rmE{{\mathrm E}} \def\rmF{{\mathrm F}}
\def\rmG{{\mathrm G}} \def\rmH{{\mathrm H}} \def\rmI{{\mathrm I}}
\def\rmJ{{\mathrm J}} \def\rmK{{\mathrm K}} \def\rmL{{\mathrm L}}
\def\rmM{{\mathrm M}} \def\rmN{{\mathrm N}} \def\rmO{{\mathrm O}}
\def\rmP{{\mathrm P}} \def\rmQ{{\mathrm Q}} \def\rmR{{\mathrm R}}
\def\rmS{{\mathrm S}} \def\rmT{{\mathrm T}} \def\rmU{{\mathrm U}}
\def\rmV{{\mathrm V}} \def\fw{{\mathrm W}} \def\rmX{{\mathrm X}}
\def\rmY{{\mathrm Y}} \def\rmZ{{\mathrm Z}}

\newcommand{\bdiv}{\mathop{\mathsf{div}}\nolimits}
\newcommand{\gdiv}{\mathop{\overline{\mathrm{div}}}\nolimits}
\newcommand{\pderiv}[3][]{\frac{\mathop{}\!\mathrm{d}^{#1} #2}{\mathop{}\!\mathrm{d} #3^{#1}}}

\def\sfa{{\mathsf a}} \def\sfb{{\mathsf b}} \def\sfc{{\mathsf c}}
\def\sfd{{\mathsf d}} \def\sfe{{\mathsf e}} \def\sff{{\mathsf f}}
\def\sfg{{\mathsf g}} \def\sfh{{\mathsf h}} \def\sfi{{\mathsf i}}
\def\sfj{{\mathsf j}} \def\sfk{{\mathsf k}} \def\sfl{{\mathsf l}}
\def\sfm{{\mathsf m}} \def\sfn{{\mathsf n}} \def\sfo{{\mathsf o}}
\def\sfp{{\mathsf p}} \def\sfq{{\mathsf q}} \def\sfr{{\mathsf r}}
\def\sfs{{\mathsf s}} \def\sft{{\mathsf t}} \def\sfu{{\mathsf u}}
\def\sfv{{\mathsf v}} \def\sfw{{\mathsf w}} \def\sfx{{\mathsf x}}
\def\sfy{{\mathsf y}} \def\sfz{{\mathsf z}}

\def\sfA{{\mathsf A}} \def\sfB{{\mathsf B}} \def\sfC{{\mathsf C}}
\def\sfD{{\mathsf D}} \def\sfE{{\mathsf E}} \def\sfF{{\mathsf F}}
\def\sfG{{\mathsf G}} \def\sfH{{\mathsf H}} \def\sfI{{\mathsf I}}
\def\sfJ{{\mathsf J}} \def\sfK{{\mathsf K}} \def\sfL{{\mathsf L}}
\def\sfM{{\mathsf M}} \def\sfN{{\mathsf N}} \def\sfO{{\mathsf O}}
\def\sfP{{\mathsf P}} \def\sfQ{{\mathsf Q}} \def\sfR{{\mathsf R}}
\def\sfS{{\mathsf S}} \def\sfT{{\mathsf T}} \def\sfU{{\mathsf U}}
\def\sfV{{\mathsf V}} \def\sfW{{\mathsf W}} \def\sfX{{\mathsf X}}
\def\sfY{{\mathsf Y}} \def\sfZ{{\mathsf Z}} 


\makeatletter
\NewDocumentCommand{\rmjmath}{}{\mathbin{\mathpalette\eplus@\relax\mspace{1mu}}}
\newcommand{\eplus@}[2]{\clipbox{-.5 -.5 0 {.35\height}}{$\m@th#1\rmj$}}
\makeatother

\DeclarePairedDelimiter{\bra}{(}{)}
\DeclarePairedDelimiter{\pra}{[}{]}
\DeclarePairedDelimiter{\set}{\{}{\}}
\DeclarePairedDelimiter{\skp}{\langle}{\rangle}

\def\scra{{\mathscr  a}} \def\scrb{{\mathscr  b}} \def\scrc{{\mathscr  c}}
\def\scrd{{\mathscr  d}} \def\scre{{\mathscr  e}} \def\scrf{{\mathscr  f}}
\def\scrg{{\mathscr  g}} \def\scrh{{\mathscr  h}} \def\scri{{\mathscr  i}}
\def\scrj{{\mathscr  j}} \def\scrk{{\mathscr  k}} \def\scrl{{\mathscr  l}}
\def\scrm{{\mathscr  m}} \def\scrn{{\mathscr  n}} \def\scro{{\mathscr  o}}
\def\scrp{{\mathscr  p}} \def\scrq{{\mathscr  q}} \def\scrr{{\mathscr  r}}
\def\scrs{{\mathscr  s}} \def\scrt{{\mathscr  t}} \def\scru{{\mathscr  u}}
\def\scrv{{\mathscr  v}} \def\scrw{{\mathscr  w}} \def\scrx{{\mathscr  x}}
\def\scry{{\mathscr  y}} \def\scrz{{\mathscr  z}} 

\newcommand{\N}{{\mathbb{N}}}
\newcommand{\diffedge}{d}
\newcommand{\AC}{\mathrm{AC}}
\newcommand{\Mgraph}{\sfM}
\newcommand{\Medges}{\sfL}
\newcommand{\nodes}{\sfV}
\newcommand{\edges}{\sfE}
\newcommand{\normal}{\sfn}
\newcommand{\R}{\mathbb{R}}
\newcommand{\EDP}{{\mathsf{EDP}}}
\newcommand{\CE}{{\mathsf{CE}}}

\newcommand{\pCE}{{\overline{\mathsf{CE}}_{\hat\nodes}}}

\newcommand{\Prb}[1]{{\mathcal{P}\!\left({#1}\right)}}
\newcommand{\Jprb}[1]{{\Pi\left({#1}\right)}}
\newcommand{\Mea}[1]{{\mathcal{M}\left({#1}\right)}}

\newcommand{\Om}{{\Omega}}
\newcommand{\Ome}{{\Omega_\varepsilon}}
\newcommand{\Omo}{{\Omega_0}}
\newcommand{\Omov}{{\overline{\Omega}}}
\newcommand{\Be}{{B_\varepsilon}}
\newcommand{\Ne}{{\mathcal{N}_\epsilon}}
\newcommand{\No}{{\mathcal{N}_0}}
\newcommand{\Nov}{{\overline{\mathcal{N}}}}

\newcommand{\rnd}[1]{{\left({#1}\right)}}
\newcommand{\sqr}[1]{{\left[{#1}\right]}}
\newcommand{\crl}[1]{{\left\{{#1}\right\}}}
\newcommand{\crlm}[2]{{\left\{{#1} ~ \middle| ~ {#2}\right\}}}
\newcommand{\res}[2]{{\left.{#1}\right\rvert_{#2}}}

\newcommand{\norm}[1]{{\left\lvert\left\lvert{#1}\right\rvert\right\rvert}}
\newcommand{\abs}[1]{{\left\lvert{#1}\right\rvert}}
\newcommand{\dd}{{\, \mathrm d}}

\newcommand{\eps}{{\varepsilon}}
\newcommand{\G}{{\mathcal{G}}}
\newcommand{\Eo}{{\overline{\mathcal{E}}}}
\newcommand{\vel}{{\mathrm{v}}}

\newcommand{\mue}{{\mu_\varepsilon}}
\newcommand{\muo}{{\mu_0}}
\newcommand{\muov}{{\overline{\mu}}}
\newcommand{\nue}{{\nu_\varepsilon}}
\newcommand{\nuo}{{\nu_0}}
\newcommand{\nuov}{{\overline{\nu}}}
\newcommand{\pie}{{\pi_\varepsilon}}
\newcommand{\pio}{{\pi_0}}
\newcommand{\piov}{{\overline{\pi}}}
\newcommand{\ce}{{c_{\!\varepsilon}}}
\newcommand{\co}{{c_0}}
\newcommand{\cov}{{\overline{c}}}
\section{Introduction} \label{sec: Intro}

The theory of optimal transport, introduced by G. Monge in \cite{Monge1781} and reformulated by L.V. Kantorovich in \cite{Kantorovich1942}, is nowadays an ubiquitous tool in many areas of applied mathematics, statistics, and data science \cite{PeyreCuturi2019, Santambrogio2015, Cuturi2013, Arjovsky2017, Panaretos2019,Kolouri2017, Frogner2015, Hallin2020MultivariateGT, lavenant2021towards, zhang2023manifold}. The theory arose from the ``logistic'' problem of moving mass from a source configuration to a target configuration at overall minimal cost. Here, the ground cost encodes the difficulty of transporting one unit of mass from a point $x$ to a point $y$. As such, the overall optimal cost thus defines a metric (or more general distance functional) between probability distributions that leverages the ground metric, which makes it a versatile tool. \\

Additionally to the optimal transport plan, establishing how to reorganize the mass from the source to the target, it is sometimes possible to add a time component to the problem and track how the reorganization should evolve as time passes. The problem without the time component is often called the \emph{static formulation}, while the other version of the problem is called the \emph{dynamic formulation}, provided it exists. \\

This dynamic formulation further connects certain evolution equations to gradient flows in the space of probability measures endowed with the Wasserstein metric. This view,  motivated by physical principles \cite{JordanKinderlehrerOtto1997,JordanKinderlehrerOtto1998, Otto2001, MielkePeletierRenger2014, Mielke2023}, has had a tremendous impact in the PDE community. In the Euclidean case, a gradient flow is given by a smooth curve $x: \R_{\geq 0} \to \Om, ~ x\rnd{t} = x_t$ for some space domain $\Om$, such that $\mathrm d x_t / \mathrm d t = - \nabla E\rnd{x_t}$ for $t \geq 0$, where $E: \Om \to \R$ is some (typically convex) energy function. Then the gradient flow $x_t$ minimizes $E$ following the steepest descent. If we now want to evaluate an energy function at not only a point in space $x_t \in \Om$, but rather for a probability distribution $\rho_t \in \Prb{\Om}$ on the space domain $\Om$, then in order to have a well-defined gradient of $E$ with respect to probability distributions we need a way of quantifying the distance between two probability measures. \\

One popular metric for quantifying the distance (in the sense of optimal transport cost) between two probability measures is the  the \textbf{Wasserstein distance}. Given a domain $\Omega \subset \R^d$, $d\in\N$, and two probability measures $\mu,\,\nu \in \Prb{\Om}$, the minimal total transport cost of going from $\mu$ to $\nu$ is given by
\begin{equation} \label{WDS} \tag{WDS}
    W_p \rnd{\mu, \nu} \coloneqq \min_{\pi \in \Jprb{\mu, \nu}} \crl{\int_{\Om \times \Om} \norm{x - y}^p \dd \pi\rnd{x, y}}^{1/p},
\end{equation}
where $\Jprb{\mu, \nu}$ denotes the set of all joint probability distributions $\pi$ on $\Om \times \Om$, with the respective marginals $\mu$ and $\nu$, meaning that $\mu\rnd{A} = \pi\rnd{A, \Omega}$ and $\nu\rnd{B} =  \pi\rnd{\Omega, B}$ for all Borel measurable sets $A,B \subset \Omega$. In the definition above, the ground cost, generally denoted by $c$, is the $p$-th power of the Euclidean distance. Further, this formulation of the Wasserstein distance solely involves an infimum over so-called couplings $\pi$ on $\Om \times \Om$ without any time component, thus it corresponds to the static formulation. \\

{In the case of a non-convex domain $\Om$, the formulation \eqref{WDS} generally also allows mass transport outside of $\Om$, as the costs only consider the Euclidean distance. In order to restrict the feasible transport to the space domain $\Om$, one can alternatively also define the static formulation of the Wasserstein distance as
\begin{equation}  
    W_p \rnd{\mu, \nu} \coloneqq \min_{\pi \in \Jprb{\mu, \nu}} \crl{\int_{\Om \times \Om} d_{\Omega,p}\rnd{x,y}^p \dd \pi\rnd{x, y}}^{1/p},
\end{equation}
where $d_{\Omega,p}$ is the shortest path distance on $\Omega$ given by
\begin{equation}
    d_{\Omega,p}\rnd{x,y} = \inf_{\substack{\xi \in C^1\rnd{\sqr{0,1}}, \\ \xi\rnd{0}=x, \, \xi\rnd{1}=y}} \crl{\int_0^1 \norm{\dot{\xi}\rnd{t}}_p^p \dd t}^{1/p}.
\end{equation}
Here, the infimum is taken over all (continuous) curves $\xi$ in $\Omega$ which are connecting $x\in\Om$ to $y\in\Om$.} \\

The optimization problem (\ref{WDS}) is linear in the coupling $\pi$, which also needs to fulfill the marginal constraints. Hence, it is natural to express the constraints by relying on Lagrange multipliers. Doing so, one obtains an ``inf-sup'' problem where it turns out that the \textit{inf} and the \textit{sup} can be exchanged. This crucial fact gives rise to a dual formulation of the problem, which we recall later in Theorem~\ref{thm: KantDual}. \\

A further important aspect is the notion of $c$-cyclical monotonicity of the set on which an optimal coupling $\pi^\star$ for the problem (\ref{WDS}) is concentrated, see Definition~\ref{def: Cycl}. The $c$-cyclical monotonicity expresses the fact that the optimality of $\pi^\star$ would be contradicted  by the possibility to re-route the mass of $\pi^\star$ to produce a coupling achieving a smaller overall cost. Such considerations thus induce strong geometric constraints on the support of $\pi^\star$, which will be leveraged in some of our following results. \\

In the seminal contribution of J.D. Benamou and Y. Brenier in 2000, \cite{benamou2000computational}, the minimization problem \eqref{WDS} was shown to have an equivalent dynamic formulation. The main idea is to consider curves of probability measures $\rnd{\rho_t}_{0\le t \le 1} \subset \Prb{\Om}$ over an artificial time interval $[0,1]$, which connect the two measures $\mu$ and $\nu$, such that $\rho_0 = \mu$ and $\rho_1 = \nu$. As mass is preserved along this evolution from $\mu$ to $\nu$, the curves $(\rho_t)_{0\le t \le 1}$ are characterized by solutions (in the sense of distributions) to the continuity equation, i.e.
\begin{equation}\label{eq:cont_intro}
    \begin{aligned}
        \partial_t \rho_t + \nabla \cdot (\rho_tv_t) = 0,\\
        \rho_0 = \mu,\quad \rho_1 = \nu,
    \end{aligned}
\end{equation}
for some (measure-valued) velocity field $v_t$. We denote the set of all such curves $\rnd{\rho_t}_{0\le t \le 1} \subset \Prb{\Om}$ fulfilling \eqref{eq:cont_intro} by $\CE\rnd{\mu,\nu}$. Then, selecting the curve with minimal kinetic energy from $\CE\rnd{\mu,\nu}$, thus corresponding to an optimal transport from $\mu$ to $\nu$, one recovers the $2$-Wassertein distance, i.e.,
\begin{align}\label{eq:dynamic_action} \tag{2-WDD}
    W_2\rnd{\mu,\nu} = \inf_{\rnd{\rho_t,v_t}\in\CE\rnd{\mu,\nu}} \crl{\int_0^1\int_\Omega \abs{v_t}^2 \dd \rho_t}^{1/2}.
\end{align}

The connection of the dynamic formulation of the Wasserstein distance to the problem of gas transport within a complicated network is done by extending \eqref{eq:dynamic_action} to metric graphs as the domain $\Om$, modeling a gas network. Roughly speaking, metric graphs are combinatorial graphs in which a one-dimensional interval is associated to each edge (see subsection \ref{sec:met_graph} for a rigorous definition). Then, the continuity equation \eqref{eq:cont_intro} can be used on each of the edges to ensure mass conservation in the individual pipes. However, to enforce global mass conservation on the entire network, additional coupling conditions at the vertices, corresponding to pipe junctions, have to be introduced. \\

So far, two approaches have been considered in the literature: \newline
In \cite{erbar2021gradient}, homogeneous Kirchhoff conditions were imposed. In this setting, the sum of mass entering the node is equal to the mass leaving it, which means that no mass is stored on the node and no additional gas is entering the network through the node. This also allows for a static formulation on the metric graph, analogous to \eqref{WDS}, by choosing a metric graph as the domain $\Om$ and replacing the Euclidean norm distance by the distance (\ref{def:graph_dist}) on the metric graph, defined in subsection \ref{sec:met_graph}. \newline
The second approach, introduced in \cite{burger2023dynamic}, allows for storage of mass on a node, utilizing a more general version of Kirchhoff's law. Here, additional continuity equations on the vertices are imposed. It was then a striking observation, made in \cite{Fazeny_2025}, and rigorously proven in \cite{weigand2025pdiffusion}, that the ISO3 model for gas flow, given as 
\begin{align}
    \label{eqn:ISO3}
    \begin{split}
        \partial_t \rho^\sfe + \partial_x\rnd{\rho^\sfe v^\sfe} & = 0 \\
        \partial_x p^\sfe\rnd{\rho^\sfe} & = -\frac{\lambda^\sfe}{2 D^\sfe} \rho^\sfe v^\sfe \abs{v^\sfe} - g \rho^\sfe \sin{\left(\omega^\sfe\right)}
    \end{split}
    \tag{ISO3}
\end{align}
can be interpreted as a 3-Wasserstein gradient flow for a certain energy functional $E$. Here, the superscript-$\sfe$-notation corresponds to the restriction of the respective functions defined on the whole gas network to a single pipe $\sfe$. The derivation of the ISO3 model is described in detail in \modcat, alongside an entire hierarchy of models for the gas flow within one pipe, all derived from the Euler equations system for compressible, inviscid fluids. Note that, here the pipe inclination angle $\omega_e$ relative to the ground is used instead of the pipe hight function $b_e$. Both approaches for the coupling conditions at vertices, \cite{erbar2021gradient} and \cite{burger2023dynamic}, are based on a dynamic formulation of the Wasserstein distance on a metric graph, where the second approach is closely related to the study of transport on coupled domains of different dimension (the 1-dimensional edges and the 0-dimensional vertices), such as \cite{monsaingeon2021new, CarioniKrautzPietschmann2025_PreferentialPaths, CarioniKrautzPietschmann2025_StarShapedGraphs}. 

While the setting of metric graphs is a convenient way to model gas networks, it has the inherent limitation that pipes are modeled as one-dimensional entities, only, utilizing an averaging argument over the pipe cross sections. This gives rise to the question whether this approach can be justified by a limiting procedure, starting from a fully three-dimensional model and, in turn, letting the diameter of the pipes go to zero. We study this question in Section~\ref{sec:3d_to_1d} and are indeed able to prove the convergence of the Wasserstein metric in its static formulation (corresponding to the case without mass storage on vertices). This serves as a starting point for further investigations of the convergence of the (equivalent) dynamic formulation and, subsequently, also the convergence of Wasserstein gradient flows.

\section{Dynamic transport distances on metric graphs} \label{sec: Wasserstein}

Our next aim is to give a brief review of dynamic transport distances on metric graphs and their connection to the ISO3 model, following the results of \cite{erbar2021gradient,Heinze2024,Fazeny_2025, weigand2025pdiffusion}. 

\subsection{Metric graphs} \label{sec:met_graph}

Using the notation of \cite{Heinze2024}, we start with an undirected 
connected graph, which is a (finite) node set $\nodes$ with edge set $\edges \subset \nodes\times \nodes$, such that if $\sfv\sfw \in \edges$ then $\sfw\sfv\not\in\edges$. We use $\edges(\sfv)$ to denote the set of all edges adjacent to node $\sfv$. Based on this combinatorial graph $(\nodes,\edges)$, we construct a metric graph $\Mgraph$ by fixing an orientation for each edge $\sfe\in\edges$ and then associating this oriented edge with a finite line segment $[0,\ell^\sfe] \subset \R$, for some edge length $\ell^\sfe>0$. \\

To fix notations, given $\sfe = \sfv\sfw$, we will call $\sfv$ \emph{tail} and $\sfw$ \emph{head} and give those the orientation
\begin{equation}\label{eq:def:normalfield}
	\normal: \nodes \times \edges \to \set*{-1,0,1} \quad\text{with}\quad \normal_{\sfv}^\sfe = \begin{cases}
		-1 , & \text{if } \sfv \text{ is the tail node of } \sfe;  \\
		0 , & \text{if } \sfv \not\in \sfe  ;  \\
		1 , & \text{if } \sfv \text{ is the head node of } \sfe. 
	\end{cases}
\end{equation}
Note that the orientation of edges is used to determine the flow direction later on, with a positive sign of the velocity if the flow goes from head to tail, and a negative sign otherwise. \\

We associate the edge length $\ell^\sfe$ to each $\sfe \in \edges$ via the function
\begin{equation}\label{e:def:metric_edges}
	\ell: \edges\to (0,\infty), \qquad\text{giving rise to the set}\qquad  \Medges = \bigsqcup_{\sfe\in \edges} \sqr{0, \ell^\sfe} \,. 
\end{equation}
Here, and in the following, $\bigsqcup_{\sfe \in \edges} \sqr{0,\ell^\sfe}$ denotes the disjoint union, also called coproduct, understood as the set of ordered pairs $\bigcup_{e\in \edges} \crl{(e,[0,\ell^e])}$.
The \textbf{metric graph} is now the triplet $\Mgraph=\rnd{\nodes,\edges,\Medges}$, where in our application, $\nodes$ represents the set of pipe junctions and pipe endpoints, while $\edges$ are the pipes of the gas network, which we assume to be finite and connected. \\

\begin{definition}[1D distance on metric graph] \label{def:graph_dist_definition}
    For $x, y \in \sfG$, we define the local distance as
    \begin{equation*}
        d_{\text{loc}} \rnd{x, y} := \left\{\begin{array}{ll}
            \abs{x - y}, & \quad \text{if} ~ x, y \in \sqr{0, \ell^\sfe}, ~ \text{for the same edge} ~ \sfe \in \edges \\
            +\infty, & \quad \text{otherwise}
        \end{array}\right. ,
    \end{equation*}
    and extend it to the whole metric graph $\sfG$ with edges $\edges$ as
    \begin{equation} \label{def:graph_dist} \tag{GD}
        d \rnd{x, y} := \min_{\emptyset \nsubseteq \crl{i_1, i_2, \dots, i_n} \subseteq \crl{1, 2, \dots, k}} \crlm{\sum_{i = i_1}^{i_n} d_{\text{loc}} \rnd{x_i, y_i}} {\begin{array}{l}
            x_{i_1} = x, y_{i_n} = y, \\
            \forall j \in \crl{1, 2, \dots, n - 1}: \\
            y_{i_j}, x_{i_{j + 1}} \in \nodes, \, y_{i_j} \sim x_{i_{j + 1}}
        \end{array}} .
    \end{equation}
    The metric graph distance corresponds to the shortest path from $x \in \sfe_{i_1}$ to $y \in \sfe_{i_n}$ along the edges $\sfe_{i_2}, \sfe_{i_3}, \dots, \sfe_{i_{n - 1}}$.
\end{definition}

Note that with the results from \cite{mugnolo2019actually}, we immediately obtain that $\rnd{\sfG, d}$ is a geodesic space. Furthermore, the previously introduced Wasserstein distance is hence also well-defined in the metric graph setting, and by \cite{erbar2021gradient}, we get that $\rnd{\Prb{\sfG}, W_2}$ is a geodesic space as well. For general $p \geq 1$, in \cite{lisini2007characterization}, it was shown that $\rnd{\Prb{\sfG}, W_p}$ is a geodesic space.

\subsection{Continuity equations}
To obtain a dynamic formulation of optimal transport, we first need to introduce a notion of the continuity equation, generalizing \eqref{eq:cont_intro} to metric graphs. Note that instead of using a velocity vector field, we introduce the momentum (also known as mass flux) $J_t=\rho_t v_t$. This has the advantage of rendering the continuity equation linear in $\rho_t$ and $J_t$, and resulting in a convex optimization problem in (\ref{eq:dynamic_action}). \\

We will separately discuss the case of mass storage on nodes, and the case with classical Kirchhoff conditions. We will closely follow the definitions and notations of \cite{Heinze2024}, but also see \cite{burger2023dynamic,Fazeny_2025} for properties of the Wasserstein metric with node mass storage and \cite{erbar2021gradient} for the version with classical Kirchhoff conditions.

\subsubsection{Mass storage on nodes}
%
{To arrive at a continuity equation structure on $\Mgraph$, we start with the definition of the continuity equation on the metric edges $\Medges$, for which we specify a family of time-dependent (and in the case of $j$ signed) measures $\rho: [0,1]\to \calM_{\geq 0}(\Medges), ~ \rho\rnd{t} \eqqcolon \rho_t$ and fluxes $j: [0,1]\to \calM(\Medges), ~ j\rnd{t} \eqqcolon j_t$. Here, we write the shortcut notation $\rho_t \in \calM(\Medges)$ for $\rho_t = \crl{\rho_t^\sfe\in \calM\rnd{\sqr{0,\ell^\sfe}}}_{\sfe\in \edges}$, and $j_t \in \calM(\Medges)$ for $j_t = \crl{j^e\in \calM\rnd{\sqr{0,\ell^e}}}_{e\in \edges}$ and we call the pair $\rnd{\rho, j}$ a solution of the continuity equation if ut holds true that
\begin{equation}\label{e:def:CE_Medges}
    \forall t \in \rnd{0, 1}, \, x \in \rnd{0, \ell^\sfe}: \quad \partial_t \rho_t^\sfe\rnd{x} + \partial_x j_t^\sfe\rnd{x} = 0, \qquad \text{for all edges} ~ \sfe \in \edges,
\end{equation}
where for all $\sfe \in \edges$ we have measures $\rho^\sfe$ and $j^\sfe$ from the dual space of the continuous functions defined on $\sqr{0, \ell^\sfe}$. Note that we do not impose any continuity or differentiability constraints at the nodes.} \\

To obtain a mass preserving evolution, we complement the continuity equation with a local Kirchhoff condition, which receives the mass leaving from the metric edges due to the net-balance of the normal fluxes of $\set{j^\sfe}_{\sfe \in \edges}$ evaluated in the head- and tail nodes of the edges, and which acts as a reservoir for each of the metric edges.
{Since, in general, $j_t \in \calM(\Medges)$ does not posses a normal flux, we introduce a new set of variables $\bar\jmath^\sfe_\sfv : [0,1]\to \R$ for any $(\sfv,\sfe)\in \nodes\times \edges$, which are coupled in a weak sense by imposing the following integration by parts formula on the metric edges: For any $j^\sfe_t\in \calM\rnd{\sqr{0,\ell^\sfe}}$ with smooth density $j^\sfe_t(x)\;\mathrm{d}{x}$, we impose the identity
\begin{equation}\label{eq:flux:IntByParts}
	\int_0^{\ell^\sfe} \varphi^\sfe(x) \partial_x j_t^\sfe(x)\;\mathrm{d}{x} = \sum_{\sfv \in \nodes(\sfe)} \varphi^\sfe|_{\sfv} \, \bar\jmath^\sfe_{\sfv, t} - \int_0^{\ell^\sfe} \partial_x \varphi^\sfe \;\mathrm{d} j^\sfe_t.
\end{equation}
Here, we write $\nodes\rnd{\sfe}$ for the set of the head- and tail node of a given edge $\sfe$ and $\varphi: \Medges \to \R$ denotes a test function, which is continuous across all nodes and continuously differentiable within the respective edge.} \\

The net-balance is stored in the node density $\gamma_\sfv$ for each node $\sfv\in \nodes$ and the \emph{local Kirchhoff law} becomes
\begin{equation}\label{e:def:localKirchhoff}
 \forall \sfv \in \nodes  : \qquad \partial_t \gamma_{\sfv, t} = \sum_{\sfe\in\edges(\sfv)} \bar\jmath^\sfe_{\sfv, t}  \qquad\text{ for a.e. } t \in [0,1]. 
\end{equation}
We define the space of probability measures on $\Mgraph$ as tuples $\mu_t = (\gamma_t,\rho_t) \in \calP(\Mgraph) \subseteq \calM_{\geq 0}(\nodes)\times \calM_{\geq 0}(\Medges)$ such that $\gamma_t(\nodes)+ \rho_t(\Medges)=1$. We are now in a position to introduce our notion of the continuity equation on a metric graph with node mass storage.
\begin{definition}[Continuity equation, \cite{Heinze2024}]\label{def:bCE}
    {A family of curves $\mu=\bra*{\gamma,\rho} : [0,1] \to \calP(\Mgraph)$ with a flux pair $\rmj= (\bar\jmath,j)  : [0,1] \to \calM((\nodes \times \edges)\times \Medges)$ satisfies the \emph{continuity equation on the metric graph with node reservoirs} if ~\eqref{e:def:CE_Medges} and~\eqref{e:def:localKirchhoff} hold, and $\rnd{\mu, \rmj}$ solves the abstract continuity equation
	\begin{equation}\label{eq:def:bCE}
		\partial_t \mu_t + \bdiv \rmj_t = 0,
	\end{equation}
    where for a.e. $t \in \rnd{0, 1}$, the measures $\mu$ and $j$ are form the dual space of the continuous functions defined on $\nodes \times \Medges$. For such a solution we then write $\rnd{\mu, \rmj} \in \CE_\nodes$.} In terms of duality, for $\Phi=(\phi,\varphi)\in C^1(\nodes \times \Medges)$ the abstract version of the continuity equation is defined by
    \begin{equation}\label{eq:cont_vertex_weak}
	\begin{aligned}
		& \pderiv{}{t} \pra[\bigg]{ \sum_{\sfv \in \nodes} \phi_\sfv \gamma_{\sfv, t} + \sum_{\sfe \in\edges} \int_0^{\ell^\sfe} \mkern-8mu \varphi^\sfe(x) \dd \rho^\sfe_t} = \\
        & \sum_{\sfv \in \nodes} \sum_{\sfe \in\edges(\sfv)} \bra*{\phi_\sfv - \varphi^\sfe|_\sfv} \bar\jmath^\sfe_{\sfv, t} + \sum_{\sfe\in\edges} \int_0^{\ell^\sfe}\mkern-8mu \partial_x \varphi^\sfe \dd j^\sfe_t  \,
    \end{aligned}
	\end{equation}
	for a.e. $t\in \rnd{0,1}$. Furthermore, we denote by $\CE_{\nodes}\rnd{\mu^0,\mu^1}$ the set of all weak solutions $\rnd{\mu,j}$ to \eqref{eq:def:bCE}, such that given initial- and final conditions $\mu_0 = \mu^0$ and $\mu_1=\mu^1$ are fulfilled.
\end{definition}
Among the solutions to~\eqref{eq:def:bCE} with bounded flux, we can find an absolutely continuous representative. \\

\begin{lemma}[Well-posedness of $\CE$, \cite{Heinze2024}]\label{lem:AC:representative}
    {Let $(\mu,\rmj)\in\CE$ with
	\begin{equation}\label{eq:flux:div:integrable}
		\int_0^1\pra[\bigg]{ \sum_{\sfv \in \nodes} \sum_{\sfe\in \edges(\sfv)} \abs{\bar\jmath^\sfe_{\sfv, t}} +  \sum_{\sfe\in \edges} \int_0^{\ell^\sfe} \dd \abs{j^\sfe_t}} \dd t < \infty \,,
	\end{equation}
	then there exists $\rnd{\tilde\mu,\tilde \rmjmath}\in\CE$ such that $\rnd{\tilde \mu_t}_{t \in \sqr{0, 1}} \subset \Prb{\Mgraph}$ is an absolutely continuous curve equipped with the narrow topology of measures. 
    This means that for all $\Phi=(\phi,\varphi) \in C(\nodes \times \Medges)$ the map
	\begin{equation*}
		t \mapsto \skp{\varphi,\tilde\mu_t}_{\sfV\times \sfL} \coloneqq \sum_{\sfv\in\nodes} \phi_\sfv \tilde\gamma_{\sfv, t} + \sum_{\sfe \in \edges} \int_0^{\ell^\sfe} \varphi^\sfe \dd \tilde\rho_t^\sfe
	\end{equation*}
    is absolutely continuous.}
\end{lemma}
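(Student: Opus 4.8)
The plan is to follow the standard regularisation argument for continuity equations (in the spirit of \cite{Heinze2024}), adapted to the coupled node/edge structure at hand; the only structural inputs are the weak identity \eqref{eq:cont_vertex_weak}, the integrability hypothesis \eqref{eq:flux:div:integrable}, and the compactness of the metric graph. Since $(\Mgraph,d)$ is a compact metric space (finitely many edges, each of finite length), $\Prb{\Mgraph}$ equipped with the narrow topology is compact and metrisable, for instance by the dual bounded--Lipschitz metric
\[
d_{BL}(\mu,\nu) := \sup\crlm{\abs{\skp{\Phi,\mu}-\skp{\Phi,\nu}}}{\norm{\Phi}_\infty\le1,\ \mathrm{Lip}(\Phi)\le1},
\]
and $C(\nodes\times\Medges)$ is separable. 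I would produce the representative $\tilde\mu$ as a $d_{BL}$-continuous extension of $\mu$, keeping the flux unchanged, $\tilde\rmjmath=\rmj$.

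First, from the weak formulation \eqref{eq:cont_vertex_weak} I would extract a uniform-in-time derivative bound. Setting
\[
g(t) := \sum_{\sfv\in\nodes}\sum_{\sfe\in\edges(\sfv)}\abs{\bar\jmath^\sfe_{\sfv,t}} + \sum_{\sfe\in\edges}\int_0^{\ell^\sfe}\dd\abs{j^\sfe_t} \in L^1(0,1),
\]
which is integrable by \eqref{eq:flux:div:integrable}, and bounding every multiplier on the right-hand side of \eqref{eq:cont_vertex_weak} by its supremum gives, for each $\Phi=(\phi,\varphi)\in C^1(\nodes\times\Medges)$ and a.e.\ $t$,
\[
\abs*{\frac{\mathrm d}{\mathrm d t}\skp{\Phi,\mu_t}} \le C(\Phi)\,g(t), \qquad C(\Phi) := \max\set*{2\norm{\Phi}_\infty,\ \max_{\sfe}\norm{\partial_x\varphi^\sfe}_\infty},
\]
using $\abs{\phi_\sfv-\varphi^\sfe|_\sfv}\le2\norm{\Phi}_\infty$. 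Hence $t\mapsto\skp{\Phi,\mu_t}$ has an $L^1$ distributional derivative and coincides a.e.\ with an absolutely continuous function obeying $\abs{\skp{\Phi,\mu_t}-\skp{\Phi,\mu_s}}\le C(\Phi)\int_s^t g\dd r$.

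To turn this into a single modification valid for all test functions at once, I would use separability: fix a countable family $\crl{\Phi_k}$ of $1$-bounded, $1$-Lipschitz functions that is sup-norm dense in the unit ball defining $d_{BL}$, apply the previous paragraph to each $\Phi_k$ (for which $C(\Phi_k)\le2$), and discard the Lebesgue-null union $N$ of the associated exceptional time-sets. For $s,t\in[0,1]\setminus N$ the estimates $\abs{\skp{\Phi_k,\mu_t}-\skp{\Phi_k,\mu_s}}\le2\int_s^t g\dd r$ then hold for all $k$ simultaneously, so passing to the supremum over the dense family yields $d_{BL}(\mu_t,\mu_s)\le2\int_s^t g\dd r$. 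Since $r\mapsto\int_0^r g$ is uniformly continuous, $\mu|_{[0,1]\setminus N}$ is uniformly $d_{BL}$-continuous and, by completeness of the compact space $\rnd{\Prb{\Mgraph},d_{BL}}$, extends uniquely to a continuous curve $\tilde\mu:[0,1]\to\Prb{\Mgraph}$ with $\tilde\mu_t=\mu_t$ off $N$ and $d_{BL}(\tilde\mu_t,\tilde\mu_s)\le2\int_s^t g\dd r$ for all $s,t\in[0,1]$. Because redefining $\mu$ on the null set $N$ changes neither the distributional time derivative in \eqref{eq:def:bCE} nor the a.e.-in-$t$ relations \eqref{e:def:CE_Medges} and \eqref{e:def:localKirchhoff}, the pair $(\tilde\mu,\tilde\rmjmath)$ with $\tilde\rmjmath=\rmj$ again lies in $\CE$ and respects the prescribed endpoints.

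The displayed bound $d_{BL}(\tilde\mu_t,\tilde\mu_s)\le2\int_s^t g\dd r$ is exactly absolute continuity of the curve $t\mapsto\tilde\mu_t$ in the narrow topology, with $L^1$-integrand $2g$; concretely, testing against any fixed $C^1$ (hence Lipschitz) $\Phi$ gives $\abs{\skp{\Phi,\tilde\mu_t}-\skp{\Phi,\tilde\mu_s}}\le\norm{\Phi}_{BL}\,d_{BL}(\tilde\mu_t,\tilde\mu_s)$, so $t\mapsto\skp{\Phi,\tilde\mu_t}$ is absolutely continuous, and density of such $\Phi$ together with $\skp{1,\tilde\mu_t}=1$ transfers (narrow) continuity of the pairing to every $\Phi\in C(\nodes\times\Medges)$. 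I expect the genuinely delicate point to be the simultaneous modification and extension of the preceding paragraph: upgrading ``each individual pairing admits an absolutely continuous representative'' to ``one curve $\tilde\mu$, defined at every time, realising all of them at once'' is what forces the use of compactness and metrisability of $\Prb{\Mgraph}$, both to make sense of the limit at the exceptional times of $N$ and to guarantee that this limit is again a probability measure. The derivative estimate and the final density step are routine by comparison.
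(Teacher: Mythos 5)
Your proposal is correct and follows essentially the same route as the paper, whose proof consists of citing \cite[Lemma 8.1.2]{ambrosio2005gradient} for the edge parts and \cite[Lemma 3.1]{Erb14} for the node--edge transitions: what you write out (the $L^1$ derivative bound $\abs{\tfrac{\mathrm d}{\mathrm dt}\skp{\Phi,\mu_t}}\le C(\Phi)\,g(t)$ from the weak formulation, a countable family of test functions with a common exceptional null set, and extension of the resulting uniformly $d_{BL}$-continuous curve by completeness of $\Prb{\Mgraph}$) is precisely the standard argument behind those references, adapted to the coupled node/edge structure. The only cosmetic point is that the weak formulation \eqref{eq:cont_vertex_weak} is stated for $C^1$ test functions, so your $1$-Lipschitz family should be taken as (or approximated by) $C^1$ functions $\Phi$ with $\norm{\Phi}_\infty\le 1$ and $\norm{\partial_x\varphi^\sfe}_\infty\le 1$, which changes nothing in the estimates.
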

\begin{proof}
	The proof is completely analogous to \cite[Lemma 8.1.2]{ambrosio2005gradient} for the metric edge parts and \cite[Lemma 3.1]{Erb14} for the edge-node-transition parts. 
\end{proof}


\subsubsection{Kirchhoff case}
In the case of classical Kirchhoff's law at the nodes, we make use of the continuity equation structure introduced in \cite{erbar2021gradient}, which is set on the quotient space
    \begin{equation*}
        \sfG \coloneqq \Medges / \sim,
    \end{equation*}
where the equivalence relation $\sim$ identifies the head and tail nodes of different intervals corresponding to the same node. This means that we consider a situation where the edges are glued together at the nodes to form one connected space domain. \\

The main difference compared to the previous section is the use of test functions that are continuous over the complete graph. Thus, the boundary terms which were present in \eqref{eq:cont_vertex_weak} disappear in the weak formulation, encoding a Kirchhoff condition on the fluxes.
\begin{definition}[Continuity equation (Kirchhoff) \cite{erbar2021gradient}]\label{def:CE:Kirchhoff}
    A family of curves $\rnd{\rho_t, J_t}_{t \in [0, 1]}$ consisting of probability measures $\rho_t \in \Prb{\sfG}$ and signed measures $J_t \in \Mea{\Medges}$,
    such that for all Borel sets $A$ the map $t \mapsto \rnd{\rho_t\rnd{A}, J_t\rnd{A}}$ is Borel measurable, is called a weak solution of \emph{continuity equation on the metric graph with Kirchhoff condition} if for all test functions $\psi \in C^1\rnd{\Medges} \cap C\rnd{\sfG}$ and a.e. $t \in \rnd{0, 1}$ it holds
    \begin{align}\label{e:def:bCE:weak_Kirchhoff}
    \frac{\mathrm d}{\mathrm d t} \int_\sfG \psi \dd \rho_t = \int_\Medges
    \nabla_x \psi \cdot \dd J_t,
    \end{align}
    where we assume
    \begin{enumerate}
        \item[(1)] $\forall \psi: \, t \mapsto \int_\sfG \psi \dd \rho_t$ is absolutely continuous,
        \item[(2)] $\int_{\rnd{0, 1}} \abs{J_t}\rnd{\Medges} 
        \dd t < \infty$.
    \end{enumerate}
    We denote by $\CE_{K}(\rho^0,\rho^1)$ the set of all weak solutions $\rnd{\rho,J}$ to \eqref{e:def:bCE:weak_Kirchhoff} such that $\rho_0 = \rho^0$ and $\rho_1=\rho^1$.
\end{definition}
\begin{remark}
{The Kirchhoff condition is implicitly contained in the weak formulation \eqref{e:def:bCE:weak_Kirchhoff}. Indeed, if the flux would be sufficiently regular to allow for integration by parts, due to the continuity of the test function across nodes, we would obtain that
$$
\sum_{e\in\edges(v)} \bar\jmath^\sfe_{\sfv} = \sum_{\sfe\in\edges(\sfv)} j_t^\sfe\rnd{\sfv} \normal_{\sfv}^\sfe = 0\qquad \text{for all} ~ \sfv \in\nodes
$$
must hold.}
\end{remark}

\subsection{Action functionals and dynamic distances}

We are now able to define dynamic transport distances based on the minimization of action functionals over curves which satisfy the respective continuity equations.

\subsubsection{Mass storage on nodes}

In view of our application, mass storage on nodes may have different interpretations. 
{It is for example possible to use mass storage on nodes to model gas inflow and outflow for the network. To this end, one differentiates between boundary nodes $\partial\nodes$, where gas may enter or exit the network (as supply or demand), and interior nodes $\mathring{\nodes}$. Then, the set of interior nodes is defined as 
\begin{equation*}
    \mathring{\nodes} = \nodes \backslash \partial \nodes,
\end{equation*}
which directly implies $\partial \nodes \cup \mathring{\nodes} = \nodes$ and $\partial \nodes \cap \mathring{\nodes} = \emptyset$ so that each node is either a boundary node or an interior node.} \\

For the set of boundary nodes $\partial \nodes$, it is possible to differentiate further between source nodes $\partial^+ \nodes$, which supply the network with gas, and sink nodes $\partial^- \nodes$, at which gas is taken out of the network to meet given demands. 
The inflow and outflow of gas into and out from the network can then be modeled via time-dependent boundary conditions at $\partial\nodes$. For the sake of readability, however, we will limit our exposition to the case without boundary nodes and refer to \cite{Fazeny_2025} for details on those. \\

For given initial- and final mass distributions  $\mu^0 := \left(\gamma^0,\rho^0\right)$ and $\mu^1 := \left(\gamma^1,\rho^1\right)$, we define the dynamical formulation of the Wasserstein distance as
\begin{equation}\label{eq:W_V}
        \begin{aligned}
            & W_{\nodes,p} \rnd{\mu^0, \mu^1} = \\
            & \min_{\rnd{\mu, j} \in \CE_{\nodes}\rnd{\mu^0,\mu^1}} \Bigg\{\int_0^1 \sum_{\sfe \in \edges} \left(\int_0^{\ell^\sfe} h\left(j_t^\sfe, \rho_t^\sfe\right) \dd \eta\right) + \sum_{\sfv \in \nodes} h\left(\sum_{\sfe \in\edges(\sfv)} \bar\jmath^\sfe_{\sfv, t}, \gamma_{\sfv, t}\right) \dd t\Bigg\}^{1/p},
        \end{aligned}
    \end{equation}
where the \emph{perspective function} $h:\mathbb{R} \times \mathbb{R} \longrightarrow \mathbb{R}_{\geq 0} $ is given as 
\begin{equation*}
    \label{eqn:helper}
\left(a, b\right) \mapsto h\left(a, b\right) = \left\{ \begin{array}{ll}
        \frac{\left\lvert a \right\rvert^p}{b^{p - 1}}, \quad & \text{if} ~ b > 0, \vspace{0.1cm}\\
        0, \quad & \text{if} ~ b = a = 0,\\
        +\infty, \quad & \text{if} ~ b < 0 ~ \vee ~ b = 0, \, a \neq 0,
    \end{array} \right. 
\end{equation*}
and it is responsible for penalizing negative densities and non-zero flux in the case of zero densities. Here, $\eta$ denotes a reference measure and, abusing notation, we denote densities of $j^\sfe$ and $\rho^\sfe$ with respect to $\eta$ still by the same symbols. \\

While \eqref{eq:W_V} was introduced in \cite{Fazeny_2025}, the gradient flows studied in \cite{Heinze2024} are understood with respect to a different version of the Wasserstein distance. Indeed, in \eqref{eq:W_V}, the contributions at the nodes depend on 
$$
\sum_{\sfe \in\edges(\sfv)} \bar\jmath^\sfe_{\sfv, t}, 
$$
i.e., the net flux summing the contributions of all edges adjacent to the node $\sfv$. In contrast to that, in \cite{Heinze2024}, each contribution is considered separately which yields the metric
\begin{equation}\label{eq:W_V_Heinze}
        \begin{aligned}
            & \overline W_{\nodes,p} (\mu^0, \mu^1) = \\
            & \min_{(\mu,j)\in \CE_{\nodes}(\mu^0,\mu^1)} \Bigg\{\int_0^1 \sum_{\sfe \in \edges} \left(\int_0^{\ell^\sfe} h\left(j^\sfe_t, \rho^\sfe_t\right) \, \mathrm d \eta\right) + \sum_{\sfv \in \nodes}\sum_{\sfe \in\edges(\sfv)} h\left(\bar\jmath^\sfe_{\sfv, t}, \gamma_{\sfv, t}\right) \, \mathrm d t\Bigg\}^{1/p}.
        \end{aligned}
    \end{equation}

\subsubsection{Kirchhoff case}

In the classical Kirchhoff case, we are considering curves $(\rho,J) \in \CE_{K}\left(\rho^0,\rho^1\right)$ for given initial- and final densities $\rho^0, \rho^1 \in \calP(\sfG)$. The Wasserstein distance is then given by
\begin{equation*}
    W_{K,p}(\rho^0, \rho^1) = \min_{\rnd{\rho, \rho v} \in \CE_K\rnd{\rho^0, \rho^1}} \crl{\int_0^1 \sum_{\sfe \in \edges} \int_0^{\ell^\sfe} \abs{v_t}^p \dd \rho_t \dd t}^{1/p},
\end{equation*}
with $\CE_K\rnd{\rho^0, \rho^1}$ as in Definition~\ref{def:CE:Kirchhoff}.
In order for the right hand side of $J_t = v_t \, \rho_t$ to be well-defined, we need a characterization of absolutely continuous curves in the $p$-Wasserstein space. This is given by a theorem proven in \cite{weigand2025pdiffusion}, which requires the following two definitions.
\begin{definition}[Absolutely continuous curve \cite{ambrosio2005gradient}]
    For $p \geq 1$, a curve $\zeta: \sqr{0, 1} \rightarrow \Prb{\sfG}$  is called $p$-absolutely continuous in the $p$-Wasserstein space, if there exists a function $m \in L^p\rnd{0, 1}$ such that for all $0 < s \leq t < 1$ it holds true that
    \begin{equation} \label{abs_cont}
        W_p\rnd{\zeta\rnd{s},\zeta\rnd{t}} \leq \int_s^t m\rnd{r} \dd r.
    \end{equation}
\end{definition}
\begin{definition}[Metric derivative \cite{ambrosio2005gradient}]
    For every $p$-absolutely continuous curve (in the $p$-Wasserstein space) $\zeta: \sqr{0, 1} \rightarrow \Prb{\sfG}$, its metric derivative is defined as
    \begin{align*}
        \abs{\dot{\zeta}}\rnd{t} \coloneq \lim_{s \rightarrow t} \frac{d\rnd{\zeta\rnd{s}, \zeta\rnd{t}}}{\abs{s - t}}
    \end{align*}
    for $t \in \rnd{0, 1}$ and it exists for a.e. such $t$, where $d$ is the shortest path distance on the metric graph, see~(\ref{def:graph_dist}).
\end{definition}
In \cite{ambrosio2005gradient}, it is proven that the metric derivative of a $p$-absolutely continuous curve exists for a.e. $t \in \rnd{0,1}$ and that $\abs{\dot{\zeta}} \in L^p\rnd{0, 1}$. Furthermore, $\abs{\dot{\zeta}}$ satisfies (\ref{abs_cont}) chosen as $m$, and $\abs{\dot{\zeta}}$ is minimal, meaning that for any other $m \in L^p\rnd{0, 1}$ satisfying the equation, it holds true that $\abs{\dot\zeta}\rnd{t} \leq m\rnd{t}$ for a.e. $t \in \rnd{0, 1}$.
\begin{theorem}[Absolutely continuous curves] \label{theorem:abs_cont}
    \begin{itemize}
        \item[(i)] If $\rho: \sqr{0, 1} \to \Prb{\sfG}$ is an absolutely continuous curve in $\rnd{\Prb{\sfG}, W_p}$ 
        then there exists, for a.e. $t \in \rnd{0,1}$, a vector field $v_t \in L^p\rnd{\rho_t}$ such that $\int_\sfG \norm{v_t} \dd \rho_t \leq \abs{\dot{\rho}}\rnd{t}$ and $\rnd{\rho, \rho v}$ is a weak solution to the continuity equation, as defined in \ref{def:CE:Kirchhoff}. \vspace{0.25cm}
        \item[(ii)]\ If $\rnd{\rho, \rho v}$
        is a weak solution to the continuity equation, as specified in \ref{def:CE:Kirchhoff} and \[
        \int_{\rnd{0,1}} \int_\sfG \norm{v_t} \dd \rho_t \dd t < \infty,
        \]
        \newline then $\rho: \sqr{0, 1} \to \Prb{\sfG}$ is an absolutely continuous curve in $\rnd{\Prb{\sfG}, W_p}$ and for almost every $t \in \rnd{0, 1}$, we obtain $\abs{\dot{\rho}\rnd{t}} \leq \int_\sfG \norm{\vel_t} \dd \rho_t$.
    \end{itemize}
\end{theorem}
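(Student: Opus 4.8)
The plan is to treat this as the metric-graph analogue of the classical characterization of absolutely continuous curves in Wasserstein space (Ambrosio--Gigli--Savar\'e, Theorem~8.3.1), transporting each step from $\R^d$ to $\sfG$. The only structural change is the underlying differential calculus: since $\sfG = \Medges/\sim$ is one-dimensional, the role of the gradient is played by the edge-wise derivative $\partial_x \psi$ of a potential $\psi \in C^1\rnd{\Medges} \cap C\rnd{\sfG}$, and the requirement that $\psi$ be continuous across the nodes is exactly what encodes the Kirchhoff condition built into Definition~\ref{def:CE:Kirchhoff}. I would aim throughout at the sharp metric-derivative identity $\abs{\dot\rho}\rnd{t} = \rnd{\int_\sfG \norm{v_t}^p \dd \rho_t}^{1/p}$ for the minimal admissible field, from which both displayed one-sided bounds follow; in particular the bound in (i) follows since $\norm{v_t}_{L^1\rnd{\rho_t}} \le \norm{v_t}_{L^p\rnd{\rho_t}}$ because $\rho_t$ is a probability measure.

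For part (ii), the converse direction, I would first regularize the pair $\rnd{\rho, \rho v}$: mollify each edge-restriction in the $x$-variable and glue near the nodes using the mass balance, producing $\rnd{\rho^\eps, v^\eps}$ which still solves the continuity equation and whose velocity is Lipschitz on each edge. For such a field the characteristic ODE $\dot X_r = v^\eps_r\rnd{X_r}$ can be integrated edge-wise and continued across the nodes consistently with the net flux, yielding a flow map $X_{s,t}$ with $\rho^\eps_t = \rnd{X_{s,t}}_\# \rho^\eps_s$. Using $\rnd{\mathrm{id}, X_{s,t}}_\# \rho^\eps_s$ as a coupling and bounding $d\rnd{x, X_{s,t}(x)} \le \int_s^t \norm{v^\eps_r\rnd{X_{s,r}(x)}} \dd r$, Minkowski's integral inequality gives
\[
W_p\rnd{\rho^\eps_s, \rho^\eps_t} \le \rnd[\bigg]{\int_\sfG d\rnd{x, X_{s,t}(x)}^p \dd \rho^\eps_s}^{1/p} \le \int_s^t \rnd[\bigg]{\int_\sfG \norm{v^\eps_r}^p \dd \rho^\eps_r}^{1/p} \dd r .
\]
Letting $\eps \to 0$, lower semicontinuity of $W_p$ and convergence of the action functional yield $W_p\rnd{\rho_s, \rho_t} \le \int_s^t \norm{v_r}_{L^p\rnd{\rho_r}} \dd r$, so that $\rho$ is $p$-absolutely continuous and $\abs{\dot\rho}\rnd{t}$ is controlled by the $L^p$-norm of $v_t$, which is the content of (ii).

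For part (i), the more delicate direction, I would realize $v_t$ by duality. Since $\rho$ is absolutely continuous, for each $\psi \in C^1\rnd{\Medges} \cap C\rnd{\sfG}$ the map $t \mapsto \int_\sfG \psi \dd \rho_t$ is absolutely continuous, and Kantorovich duality on the geodesic space $\rnd{\sfG, d}$ gives the pointwise estimate
\[
\abs[\bigg]{\frac{\mathrm d}{\mathrm d t} \int_\sfG \psi \dd \rho_t} \le \abs{\dot\rho}\rnd{t} \rnd[\bigg]{\int_\sfG \abs{\partial_x \psi}^{p'} \dd \rho_t}^{1/p'} \qquad \text{for a.e. } t,
\]
with $p'$ the conjugate exponent. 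Thus at a.e. fixed $t$ the linear functional $\partial_x \psi \mapsto \frac{\mathrm d}{\mathrm d t}\int_\sfG \psi \dd \rho_t$ is bounded, with norm at most $\abs{\dot\rho}\rnd{t}$, on the subspace of edge-wise gradients of node-continuous potentials inside $L^{p'}\rnd{\rho_t}$. By Hahn--Banach extension and Riesz representation there is $v_t \in L^p\rnd{\rho_t}$, lying in the closure of that gradient subspace, representing the functional; testing against $\partial_x \psi$ then reproduces exactly the weak continuity equation of Definition~\ref{def:CE:Kirchhoff}, with $\norm{v_t}_{L^p\rnd{\rho_t}} \le \abs{\dot\rho}\rnd{t}$. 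Choosing the minimal-norm representative and invoking a standard measurable-selection argument provides joint measurability of $\rnd{t,x} \mapsto v_t(x)$, so that $\rnd{\rho, \rho v}$ is an admissible pair.

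The main obstacle in both directions is the node structure. In (ii) the characteristics must be defined as curves that pass through vertices, and one has to verify that the set of trajectories hitting a node degenerately is $\rho$-negligible and that the Kirchhoff mass balance makes $X_{s,t}$ essentially single-valued and measure-consistent. In (i) the corresponding difficulty is that node-continuity of $\psi$ makes the admissible gradients a proper closed subspace of $L^{p'}\rnd{\rho_t}$, so one must check that the Riesz representative $v_t$ is genuinely compatible with this constraint and is not forced by spurious directions concentrated at the (generically $\rho_t$-null) vertices; this, together with the joint measurability in $\rnd{t,x}$, is where the graph-specific care is required.
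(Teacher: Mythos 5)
The paper does not actually prove Theorem~\ref{theorem:abs_cont}: immediately after the statement it is attributed to \cite{erbar2021gradient} for $p=2$ and to \cite{weigand2025pdiffusion} for general $p\geq 1$, so your proposal can only be judged against the standard arguments in that literature. Your part (i) --- realizing $v_t$ by Hahn--Banach extension and Riesz representation of the functional $\partial_x\psi \mapsto \frac{\mathrm d}{\mathrm dt}\int_\sfG \psi \,\mathrm d\rho_t$ on the closure of the subspace of edge-wise gradients of node-continuous test functions --- is indeed the metric-graph transcription of Ambrosio--Gigli--Savar\'e, Theorem~8.3.1, and your outline of it (including the $L^1$-versus-$L^p$ reduction and the caveat that the gradient subspace is a proper closed subspace of $L^{p'}\rnd{\rho_t}$) is sound in its essentials.

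Part (ii), however, contains a genuine gap. Your argument rests on integrating the characteristic ODE $\dot X_r = v^\eps_r\rnd{X_r}$ to produce a flow map $X_{s,t}$ with $\rho^\eps_t = \rnd{X_{s,t}}_\# \rho^\eps_s$, and you assert that the Kirchhoff mass balance makes $X_{s,t}$ ``essentially single-valued.'' This is false on any graph with a junction. Consider a Y-junction where the incoming edge carries unit flux and each outgoing edge carries half of it: the pair $\rnd{\rho,\rho v}$ solves the continuity equation of Definition~\ref{def:CE:Kirchhoff} with a velocity that is smooth on every edge, yet every particle arriving at the vertex sees the same velocity field, so any deterministic continuation rule sends all of them into the same outgoing edge; no flow of the ODE can reproduce the $50/50$ splitting of mass, and mollification in $x$ does not remove the vertex. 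This is exactly the branching phenomenon the paper itself emphasizes in its discussion of why optimal maps may fail to exist. The standard repair --- and what the cited proofs rely on --- is to avoid deterministic characteristics entirely: either invoke the superposition principle for curves of measures over general metric spaces (Lisini \cite{lisini2007characterization}, already cited in the paper for the geodesic property of $\rnd{\Prb{\sfG}, W_p}$), obtaining a measure $\eta$ on the space of curves with time marginals $\rho_t$, concentrated on absolutely continuous curves of speed $\norm{v_t}$, and then use $\rnd{e_s,e_t}_\#\eta$ as the coupling; or estimate $W_p\rnd{\rho_s,\rho_t}$ directly by Kantorovich duality from the weak formulation. With either replacement your chain of inequalities in (ii) goes through; without it, the object $X_{s,t}$ your proof is built on simply does not exist for the solutions the theorem is about.
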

This result was already proven for $p\! =\! 2$ in \cite{erbar2021gradient}, and it was generalized to $p\! \geq\! 1$ in \cite{weigand2025pdiffusion}. It is an important result, as Theorem \ref{theorem:abs_cont} (i) shows that every $W_p$-absolutely continuous curve $\rho_t$ can be realized as a physical mass flow with an $L^p$-finite velocity field $v_t$ and mass conservation, so $\rho_t$ encodes a feasible optimal transport of the gas through the network. Moreover, Theorem \ref{theorem:abs_cont} (ii) shows the opposite direction, meaning that having a mass flow with finite $p$-energy, which also satisfies the continuity equation, then the corresponding mass trajectory $\rho_t$ is an $W_p$-absolutely continuous curve, thus any weak solution of \ref{e:def:bCE:weak_Kirchhoff} with an integrable velocity $v_t$ is a feasible optimal transport. Regarding the dynamic formulation of the $p$-Wasserstein distance, this theorem ensures that the mass flux $J_t$ from a weak solution of the continuity equation can be decomposed into $J_t = \rho_t \, v_t$.

\subsection{Gradient flows on metric graphs}

Since the introduction of Otto-Wasserstein gradient flow, \cite{JordanKinderlehrerOtto1997,JordanKinderlehrerOtto1998, Otto2001}, many equations were recognized as gradient flows in Wasserstein-type metrics. We briefly review some results for equations set on metric graphs.

\subsubsection{Mass storage on nodes}

So far, gradient flows in the setting of mass storage on nodes have not been studied in relation to gas networks. Instead, linear drift-diffusion equations were considered in \cite{Heinze2024}. 
To introduce the system, we fix a positive reference probability measure $(\omega,\pi)\in \calP_+(\Mgraph)$, where we assume $\pi$ to have a density given by $\pi(\mathrm{dx})= e^{-P(x)} \mathrm{dx}$ for some potential $P: \Medges\to \R$, such that $P^\sfe : \sqr{0,\ell^\sfe}\to \R$ is Lipschitz for all $\sfe\in \edges$. With this, we consider for a.e. $t \in (0,1)$, the system
\begin{subequations}\label{eq:system_intro_linear}
\begin{align}
    \label{eq:edge_intro_linear}
    & \partial_t \rho^\sfe = \diffedge^\sfe \partial_x\cdot\bra*{\partial_x \rho^\sfe +\rho^\sfe \partial_x P^\sfe },  &&\text{on } [0,\ell^e] \text{ for every }  \sfe \in \edges, \\
    \label{eq:Kirchhoff_intro_linear}
    & - \diffedge^\sfe \left.\Big(\partial_x \rho^\sfe + \rho^\sfe \partial_x P^\sfe\right)\Big|_{\sfv}\cdot \sfn_{\sfv}^\sfe =r(\sfe,\sfv) \, \left.\rho^\sfe\right|_{\sfv} - r(\sfv,\sfe) \gamma_\sfv ,  && \text{for all } \sfe\in \edges, \, \sfv\in \nodes,\\
    \label{eq:vertex_intro_linear}
    & \partial_t\gamma_\sfv = \sum_{\sfe\in \edges(\sfv)} \Big(r(\sfe,\sfv) \, \left.\rho^\sfe\right|_{\sfv} - r(\sfv,\sfe) \gamma_\sfv\Big),  && \text{for all } \sfv\in \nodes \,.
\end{align}
\end{subequations}
We set $\left.\rho^\sfe\right|_{\sfv}= \rho^\sfe(0)$ if $\sfe=\sfv\sfw$, and we set $\left.\rho^\sfe\right|_\sfv = \rho^\sfe(\ell^\sfe)$ if $\sfe=\sfw\sfv$ for some $\sfw\in \nodes$.
Moreover, we introduced the parameter $\diffedge^\sfe>0$ as the diffusion constant on the metric edge $\sfe$, $r(\sfe,\sfv)$ as the jump rate from the endpoint of the metric edge associated with $\sfe$ to an adjacent node $\sfv$, and $r(\sfv,\sfe)$ for the reverse jump rate from the node $\sfv$ to the adjacent edge $\sfe$. \\

The main result of \cite{Heinze2024} is that system~\eqref{eq:system_intro_linear} admits a gradient flow formulation, provided that the jump rates satisfy the \emph{detailed balance condition}, with respect to the reference measure $(\omega,\pi)\in \calP_+(\Mgraph)$, that is it holds
\begin{equation}\label{eq:DBC:r}
	r(\sfe,\sfv) 
    \sqrt{\frac{\pi^\sfe\smash[t]{|_\sfv}}{\omega_\sfv}}
    = r(\sfv,\sfe) 
    \sqrt{\frac{\omega_\sfv}{\pi^\sfe|_\sfv}} \eqqcolon \scrk^\sfe_{\sfv}
    \qquad \text{for all } \sfe\in \edges, \, \sfv\in \nodes \,.
\end{equation}
The driving entropy functional is given by 
\begin{align}\label{eq:def:free_energy}
	\calE(\mu) &\coloneqq \calE_{\Medges}(\rho) + \calE_{\nodes}(\gamma) \coloneqq \sum_{\sfe\in\edges}\calH\left(\rho^\sfe \middle| \pi^\sfe\right)+
	\sum_{\sfv\in\nodes} 
	\calH\left(\gamma_\sfv \middle|\omega_\sfv\right) \,,
\end{align}
where the relative entropy $\calH$ is defined as
\begin{equation}\label{eq:def:RelEnt}
	\calH\left(\mu \middle| \nu\right) \coloneqq \begin{cases}
		\int \eta\bra[\big]{\pderiv{\mu}{\nu}} \dd \nu ,& \text{if } \mu \ll \nu \,, \\
		+\infty , &\text{else,}
	\end{cases}
\end{equation}
where $\eta(r)\coloneqq r\log r -r +1$ for $r\geq 0$.
Here and in the following, we use the convention that $0\cdot \log 0 =0$.

\begin{remark}
As the above system~\eqref{eq:system_intro_linear} is a combination of diffusion on the metric edges and a reaction-like mechanism for the mass exchange between edges and nodes, the gradient flow structure also consists of two parts. While the edge dynamics can be captured by quadratic action functionals as introduced above, the reaction part requires a non-linear $\cosh$-type functional. Thus, even on a formal level, the combination of quadratic diffusion terms with non-linear reaction terms does not yield a metric structure, and instead a JKO approach as in \eqref{eq:JKO}, one uses an $\mathcal R^*$–$\mathcal R$ formulation, see also \cite{LieroMielkePeletierRenger2017} for details.
\end{remark}

\subsubsection{Kirchhoff case}

In \cite{Fazeny_2025}, it was shown that the \eqref{eqn:ISO3} model is a $W_{K,3}$-gradient flow with respect to the energy functional
\begin{equation*}
    \mathcal{E} \rnd{\rho} \coloneqq \sum_{\sfe \in \edges} \mathcal{E}^\sfe\rnd{\rho^\sfe} = \sum_{\sfe \in \edges} \int_0^{\ell^\sfe} F^\sfe\rnd{\rho^\sfe} + \frac{2 D^\sfe g}{\lambda^\sfe} \sin\rnd{\omega^\sfe} \rho^\sfe x + d^\sfe \rho^\sfe \dd x.
\end{equation*}
Here, the entropy density $F^\sfe$ for each edge $\sfe$ is defined based on the pressure-entropy relation
\begin{equation*}
    {F^\sfe}''\rnd{\rho^\sfe} = \frac{2 D^\sfe}{\lambda^\sfe} \frac{p'\rnd{\rho^\sfe}}{\rho^\sfe}
\end{equation*}
together with $F^\sfe\rnd{0} \coloneqq 0$. Moreover, the integration constants $d_e$ arise from the fact that even though the global mass within the network is conserved, the mass per edge can change between $\rho_0$ and $\rho_1$. \\

In order to derive $p$-Wasserstein gradient flows with the defined energy $\mathcal{E}$, the optimality conditions of the \textit{minimizing movement scheme}
\begin{equation}\label{eq:JKO}
    \rho_{\rnd{i + 1} \tau} = \text{arg} \min_{\rho \in \Prb{\sfG}} \crl{\frac{1}{p \,\tau^{p - 1}} W_{K,p}^p\rnd{\rho, \rho_{i \tau}} + \mathcal{E}\rnd{\rho}},
\end{equation}
starting at the initial network state $\rho_0 \coloneqq \rho_0$, for step size $\tau > 0$ are analyzed. It is then proven that for $\tau \rightarrow 0$, an appropriate time interpolation of the minimizers converges to a solution of the respective $p$-Wasserstein gradient flow. \\

The minimizing movement scheme provides a variational time discretization of the gradient flow associated with the energy functional $\mathcal{E}$ in the $p$-Wasserstein space, where in each time step the new $\rho_{\rnd{i + 1} \tau}$ is chosen such that on the one hand it is ``close enough'' to the previous network state $\rho_{i \tau}$ (minimizing the transport cost), and on the other hand such that the new network state is a low energy state (as we aim at minimizing the energy $\mathcal{E}$). \\

Taking $\tau \rightarrow 0$ in the optimality conditions, then the resulting equation system of the $p$-Wasserstein gradient flow is given by
\begin{align*}
    \partial_t \rho^\sfe + \partial_x j^\sfe & = 0, \\
    j^\sfe \, \abs{j^\sfe}^{p - 2} & = - \rnd{\rho^\sfe}^{p - 1} \, \partial_x \rnd{\mathcal{E}'\rnd{\rho^\sfe}}.
\end{align*}
Plugging in our specific choice for the energy functional $\mathcal{E}$ as well as $p = 3$ results in \eqref{eqn:ISO3}, expressed by
\begin{align*}
    \partial_t \rho^\sfe + \partial_x j^\sfe & = 0, \\
    j^\sfe \, \abs{j^\sfe}^{p - 2} & = - \rnd{\rho^\sfe}^2 \, \rnd{\frac{2 D^\sfe}{\lambda^\sfe \rho^\sfe} \partial_x p^\sfe\rnd{\rho^\sfe} + \frac{2 D^\sfe g}{\lambda^\sfe} \sin\rnd{\omega^\sfe}}.
\end{align*}

For the same Wasserstein distance, yet in the case $p=2$, the authors of \cite{erbar2021gradient} rigorously proved that the drift--diffusion--aggregation equation
\begin{align*}
\partial_t \rho=\Delta \rho+\nabla \cdot\Big(\rho\rnd{\nabla V+\nabla W\sqr{\rho}}\Big)
\end{align*}
with non-local interaction part
$$
W\sqr{\mu}(x):=\int_{\sfG} W(x, y) \dd \mu(y)
$$
is a gradient flow with respect to the metric $W_{K,2}$.
In this case, the energy functional is given as a sum of the logarithmic entropy and the non-local part
%
%
%
$$
\mathcal{W}(\rho):=\frac{1}{2} \int_{\sfG \times \sfG} W(x, y) \dd \rho(x) \mathrm{d} \rho(y),
$$
where $W: \sfG \times \sfG \rightarrow \mathbb{R}$ is symmetric and Lipschitz continuous. 

\section{Zero-width limit of gas networks}\label{sec:3d_to_1d}



To prove our main results we need to introduce some concepts that can be found in most textbooks on the subject of optimal transport, such as \cite{villani2008optimal,Santambrogio2015}. The major concepts introduced below were already motivated in the introduction. 

\begin{definition}[Cyclical monotonicity]
\label{def: Cycl}
Let $X$,$Y$ be arbitrary
sets, and take a cost function $c: X \times Y \to (-\infty, +\infty]$ . A subset $\Gamma \subset X \times Y$
is said to be $c$-cyclically monotone if, for any $N \in \mathbb{N}$, and any family
$(x_1,y_1),\ldots,(x_N,y_N)$ of points in $\Gamma$, the inequality
 \[
 \sum_{i=1}^N c(x_i, y_i) \le  \sum_{i=1}^N c(x_i, y_{i+1}).
 \] 
holds, using the convention $y_{N+1} = y_1$. A transference plan or coupling is said to be
$c$-cyclically monotone if it is concentrated on a $c$-cyclically monotone set.
\end{definition}

\begin{definition}[$c$-concavity]
A function $\phi : Y \to \mathbb{R}\cup \{-\infty\}$ is called $c$-concave if it
is not identically $-\infty$ and there exists $\psi : X \to \mathbb{R}\cup \{+\infty\}$,
such that
\[
\phi(y) = \inf_{x\in X} \Big\{c(x,y)- \psi(x)\Big\}.
\]
\end{definition}

\begin{theorem}[Kantorovich Duality]
\label{thm: KantDual}
    Let $X$ and $Y$ be Polish spaces, let $\mu \in \mathcal{P}(X)$ and $\nu \in \mathcal{P}(Y)$, and let $c: X \times Y \to \R_+ \cup \{ +\infty\}$ be lower semicontinuous. Then the following holds:
    \begin{enumerate}
        \item Kantorovich duality:
        \begin{equation} \label{eq: dual}
            \operatorname{OT}(\mu, \nu, c):=\inf_{\pi \in \Pi(\mu, \nu)} \crl{\int c \, \mathrm{d}\pi} = \sup_ {\substack{\phi \in L^1(\mu), \, \psi \in L^1(\nu), \\ \phi(x) \, + \,  \psi(y) \, \le \, c(x,y)}} \crl{\int \phi \, \mathrm{d}\mu + \int \psi \, \mathrm{d}\nu}.
        \end{equation}
        \item If c is real-valued and the optimal cost $\operatorname{OT}(\mu, \nu, c)$ is finite, then there exists a $\pi$-measurable $c$-cyclically monotone set $\Gamma \subset X \times Y $ (closed if $c$ is continuous) such that for any $\pi \in \Pi(\mu, \nu)$ the following statements are equivalent: \\
        (a) $\pi$ is optimal; \\
        (b) $\pi$ is c-cyclically monotone; \\
        (c) $\pi$ is concentrated on $\Gamma$; \\
        (d) there exists a $c$-concave function $\phi$ such that the equation $\phi(x) + \phi^c(y) = c(x,y)$ holds $\pi$-almost surely.
    \end{enumerate}
\end{theorem}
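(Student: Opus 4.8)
The plan is to prove the two assertions in sequence, establishing the scalar duality of part~(1) first and then bootstrapping it, together with a compactness argument, into the equivalences of part~(2). For part~(1) I would start with the elementary \emph{weak duality} inequality: for any admissible pair $\rnd{\phi,\psi}$ with $\phi(x)+\psi(y)\le c(x,y)$ and any $\pi\in\Jprb{\mu,\nu}$, integrating this pointwise bound against $\pi$ and invoking the marginal constraints gives
\begin{equation*}
    \int \phi \dd\mu + \int \psi \dd\nu = \int \bra*{\phi(x)+\psi(y)} \dd\pi \le \int c \dd\pi,
\end{equation*}
whence $\sup(\text{dual}) \le \inf(\text{primal})$. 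Existence of an optimal $\pi^\star$ is handled independently: since $X,Y$ are Polish the marginals are tight, so $\Jprb{\mu,\nu}$ is tight and, by Prokhorov, narrowly precompact; it is narrowly closed, and writing the lower semicontinuous $c$ as an increasing limit of bounded continuous functions shows $\pi\mapsto\int c\dd\pi$ is narrowly lower semicontinuous, so the infimum is attained.

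The substantive part of~(1) is the reverse inequality, i.e.\ the absence of a duality gap. I would obtain it through the Fenchel--Rockafellar duality theorem: phrase the primal as the minimisation of the linear functional $\pi\mapsto\int c\dd\pi$ subject to the affine marginal constraints on the space of measures, and identify the convex conjugate of the constraint functional with the dual supremum over potentials. The required qualification hypothesis is verified by exhibiting a single feasible point at which the relevant functionals are continuous and the cost is finite -- the product coupling $\mu\otimes\nu$ serves, using the finiteness of the optimal cost. An equally valid alternative would be a minimax argument, exchanging $\inf_\pi$ and $\sup_{\phi,\psi}$ after regularising $c$ from below by bounded continuous costs and passing to the limit.

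For part~(2), assume now that $c$ is real-valued with finite optimal cost, fix the optimal $\pi^\star$ from above, and run the cycle of implications. The step (a)$\Rightarrow$(b) -- an optimal plan concentrates on a $c$-cyclically monotone set -- I would argue by contradiction: a finite family in the support violating the inequality of Definition~\ref{def: Cycl} lets one build a competitor that re-routes a small amount of mass cyclically among positive-measure boxes around the offending points, strictly lowering the cost. For (b)$\Rightarrow$(d) I would invoke the R\"uschendorf construction: starting from a $c$-cyclically monotone set $\Gamma_0$ carrying $\pi^\star$ and a base point $\rnd{x_0,y_0}\in\Gamma_0$, set
\begin{equation*}
    \phi(x) \coloneqq \inf \crl{ c(x,y_m) - c(x_m,y_m) + c(x_m,y_{m-1}) - \cdots + c(x_1,y_0) - c(x_0,y_0) },
\end{equation*}
the infimum ranging over $m\in\N$ and chains $\rnd{x_i,y_i}\in\Gamma_0$; cyclic monotonicity forces $\phi\not\equiv-\infty$, makes $\phi$ a $c$-concave function, and yields $\phi(x)+\phi^c(y)=c(x,y)$ on $\Gamma_0$, where $\phi^c(y)\coloneqq\inf_x\crl{c(x,y)-\phi(x)}$. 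The implication (d)$\Rightarrow$(a) closes the loop, since admissibility of $\rnd{\phi,\phi^c}$ together with the $\pi$-a.s.\ equality and weak duality pins $\int c\dd\pi$ to the dual value. Taking $\Gamma\coloneqq\crl{(x,y):\phi(x)+\phi^c(y)=c(x,y)}$ for the potential built from $\pi^\star$ gives one measurable set (closed when $c$ is continuous) witnessing (c), so that (a)$\Leftrightarrow$(c) holds for every $\pi$.

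The main obstacle I anticipate is twofold, and in both places the difficulty stems from $c$ being merely lower semicontinuous rather than continuous. First, verifying the Fenchel--Rockafellar qualification rigorously requires the right pairing of bounded continuous functions with measures and a controlled approximation of $c$ from below, so that the conjugate is genuinely attained in the limit. Second, and more delicately, the perturbation in (a)$\Rightarrow$(b) must be performed \emph{measurably}: one cannot evaluate $c$ at individual points of a null set, so the competitor has to be assembled by disintegrating $\pi^\star$ and exchanging mass on positive-measure product sets where $c$ is nearly constant, which is precisely where lower semicontinuity complicates the one-sided estimates. A final technical point is establishing measurability and integrability of the R\"uschendorf potential, so that $\phi\in L^1(\mu)$ and $\phi^c\in L^1(\nu)$; this is secured by restricting the defining infimum to a countable family of chains and using the finiteness of the optimal cost to control the resulting $L^1$ bounds.
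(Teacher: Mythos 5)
The paper never proves this statement at all: Theorem~\ref{thm: KantDual} is quoted as background and its proof is deferred to Theorem~5.10 of Villani's book, so the only meaningful comparison is against that standard proof. Your outline does follow the same architecture as the cited source (weak duality, a Fenchel--Rockafellar or minimax argument with approximation of $c$ from below, then the cycle of implications built on cyclical monotonicity and the R\"uschendorf potential), but two of your steps have concrete defects that the standard proof is specifically engineered to avoid.

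First, your constraint qualification for Fenchel--Rockafellar is misplaced. That duality is run in the function space $C_b(X\times Y)$ paired with its dual space of finite signed measures; the qualification requires a \emph{function} at which one convex functional is continuous and the other is finite (one typically takes the constant function $u\equiv 1$, using $c\ge 0$). The product coupling $\mu\otimes\nu$ is a point of the dual space, not of the primal one, so it cannot serve as the qualification point; moreover part~(1) makes no finiteness assumption whatsoever, and the identity must also hold when both sides are $+\infty$ --- which is precisely why the standard proof first establishes duality for bounded continuous (or truncated) costs and then passes to the limit, a repair your alternative minimax route does correctly. Second, and more seriously, the implication (a)$\Rightarrow$(b) by direct re-routing genuinely fails when $c$ is not continuous (part~(2) only assumes $c$ real-valued and lower semicontinuous). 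To propagate a strict violation $\sum_{i} c(x_i,y_i) > \sum_{i} c(x_i,y_{i+1})$ from points of the support to positive-measure neighborhoods, you must prevent $c$ from jumping \emph{up} at the swapped pairs $(x_i,y_{i+1})$, i.e.\ you need upper semicontinuity there; lower semicontinuity controls only the wrong side of the inequality. This is why Villani does not prove (a)$\Rightarrow$(b) by perturbation: cyclical monotonicity of optimizers is extracted from the duality--approximation argument itself (and, for general Borel costs, requires the measure-theoretic machinery of Beiglb\"ock et al.). You flag the measurability issue, but the fix you sketch --- exchanging mass on product sets where $c$ is ``nearly constant'' --- still relies on the one-sided continuity you do not have, so as written that step is a genuine gap.
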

For a proof we refer to Theorem~5.10 in \cite{villani2008optimal}.

\subsection{3D modeling of gas network}

In the three-dimensional case, we assume that each pipe has diameter $D = 2 \eps > 0$ and thus it will be represented by a cylinder with length $\ell^\eps$ and radius $\eps$, instead of a segment.

\begin{remark}[3D metric graph]
    While the sets $\nodes$ and $\edges$ remain the same in the 3D setting, the set $\Medges$ changes to
    \begin{equation*}
        \Medges = \bigsqcup_{e\in \edges} \Be \times \sqr{0, \ell^\eps}, 
    \end{equation*}
    where $\Be \coloneqq \crlm{x \in \R^2}{\norm{x}_2 \leq \eps}$ is the two-dimensional $\eps$-disk around the origin. Depending on the angle between pipes, for two connected three-dimensional edges $e_1, e_2 \in \edges$ the spaces $\Be \times \rnd{0, \ell^{e_1}}$ and $\Be \times \rnd{0, \ell^{e_2}}$ are in general not disjoint. Therefore, alternatively the edges $e$ can also only be assigned the respective disjoint space
    \begin{equation*}
        \Be \times \sqr{0, \ell^e} \Big\backslash \Big(\bigcup_{\tilde{e} \in \edges \backslash \crl{e}} \Be \times \sqr{0, \ell^{\tilde{e}}}\Big),
    \end{equation*}
    while the remaining ``shared'' space is assigned to the corresponding vertex, so for each $\sfv \in \nodes$ we would assign
    \begin{equation*}
        \bigcup_{\substack{e_1, e_2 \in \edges: \, e_1 \neq e_2, \\
        \sfv \in e_1, \, \sfv \in e_2}} \rnd{\Be \times \sqr{0, \ell^{e_1}}} \cap \rnd{\Be \times \sqr{0, \ell^{e_2}}}.
    \end{equation*}
    For our following results this distinction is of no importance, as we will always consider the union of all three-dimensional edges embedded in $\R^3$.
\end{remark}

\subsection{Main results}

Let $\Nov \coloneqq \G \subset \R$ be the one-dimensional metric graph, which encodes the general structure of the gas network. Furthermore, we encode the three-dimensional gas network with $\mathcal{N}_\eps \coloneqq \Nov + \mathbb{B}_\eps \subset \R^3$, where the sum means Minkowski addition and $\mathbb{B}_\eps \coloneqq \crlm{x \in \R^3}{\norm{x}_2 \leq \eps}$ is the closed ball of radius $\eps$ in $\R^3$. Then, $\No$ corresponds to the 1D metric graph embedded in $\R^3$, thus its intrinsic dimension is 1. \\

For a well-defined limit $\eps \to 0$ of the densities, describing the distribution of gas in the network, we need probability measures $\rnd{\mue}_\eps, \rnd{\nue}_\eps \in \Prb{\R^3}$ with support $\Ne$, which weakly converge to probability measures $\muo, \nuo \in \Prb{\No}$. These limits $\muo$ and $\nuo$ correspond to lifted versions of probability measures $\muov, \nuov \in \Prb{\Nov}$, where the mass is allocated only along the lines of $\Nov$. This means that $\mu_0$ and $\nu_0$ can be written as
\begin{equation*}
    \mu_0 = \overline{\mu} \otimes \delta_{x_2, x_3 = 0} \in \Prb{\Ne} \quad \text{and} \quad \nu_0 = \overline{\nu} \otimes \delta_{y_2, y_3 = 0} \in \Prb{\Ne},
\end{equation*}
for the Dirac measure $\delta$ on $\R^3$, so for any $Z \subseteq \R^3$ it is defined as
\begin{equation*}
    \delta_{z_2, z_3 = 0} \rnd{Z} = \Bigg\{\begin{array}{ll}
        1 & \quad \text{if} ~ \exists z \in Z: ~ z_2 = z_3 = 0, \\
        0 & \quad \text{otherwise.}
    \end{array}
\end{equation*}

In the following, for the support of the probability measures we will use the short-hand notation $\operatorname{spt}\rnd{\mue}, \operatorname{spt}\rnd{\nue} = \Ne$.

\subsection{Convergence on the entire network.}

We first define a penalty function $\iota_\Ne$ on $\R^3$, that incorporates the geometry of the 3D gas network $\Ne$, penalizing any transport outside of the network. 
For a compact set $U\subset \R^3$, let
\[
    \iota_U(x) := \begin{cases} \ 0, \quad\quad & \text{if} ~ x \in U,\\ \ +\infty,  & \text{else}.
\end{cases}
\]
It is a standard result from convex analysis that $\iota_U(x)$ is lower-semicontinuous when $U$ is closed. \\

By $\mathcal{C}(x,y)$ we denote the set of piecewise continuously differentiable curves $\gamma: \sqr{0, 1} \rightarrow \R^3, \, t \mapsto \gamma_t$ with endpoints $x,y$, i.e., $\gamma \in  \mathcal{C}(x,y)$ implies that $\gamma_0=x$, $\gamma_1=y$ and $\dot\gamma_t := \mathrm{d} \gamma_t /  \mathrm{d}t$ is well-defined for a.e. $t \in \sqr{0, 1}$. \\

Define the cost function $\ce(x,y) : \R^3 \times\R^3 \to \R \cup \{+\infty\} $ as 
\begin{equation}
\label{eq: cost}
    \ce(x,y) := \inf_{\gamma \in \mathcal{C}(x,y)} \crl{\int_0^1 \|\dot \gamma_t\|^2 + \iota_{\mathcal{N}_\eps}(\gamma_t) \dd t}.
\end{equation}

Importantly, remark that for two points $x, y$ in the same pipe of the three-dimensional network, the infimum in (\ref{eq: cost}) is attained and the optimal path $\gamma_t$ is a straight line between $x$ and $y$. The defined cost function is particularly attractive as it combines the aforementioned property, while ensuring that the cost function is well defined between any two points of the network and thus gives rise to a geodesic-type of distance. \\

The optimal transport problem on the 3D network is then given by
\begin{equation}
\label{eq: OTeps} \tag{OT}
\operatorname{OT}\rnd{\mue, \nue, \ce} = \inf_{\pie \in \Pi(\mu_\eps, \nu_\eps)} \crl{\int_{\R^3 \times \R^3} \ce(x,y) \dd \pie(x,y)},
\end{equation}
as in \eqref{eq: dual}.

\subsubsection{Properties of the cost}
\label{sec: PropCost}
First, notice that defining cost functions based on action minimizing curves is a classical theme in optimal transport, see for instance~\cite[Chapter~7]{villani2008optimal}. The problem of interest can be thought of as plain optimal transport, where the trajectories $\gamma$ need to avoid exiting the pipes, a problem conceptually not far from optimal transport in the presence of obstacles. In this last setting for instance, \cite{CARDALIAGUET201143} relies on a similar
construction. \\

In a more computational context, \cite{pooladian2024neural} proposed to study a similar problem where $\iota_U(x)$ is replaced by a smooth function acting as a barrier. A smooth penalty function has indeed additional computational benefits, however it does not strictly enforce the condition of staying within the network, which could lead to $\gamma$ ``cutting the corner'' at pipe intersections. \\

The introduced cost function relies on curves minimizing the \emph{action} 
$$\mathcal{A}(\gamma) :=\int_0^1 \|\dot \gamma_t\|^2 + \iota_{\mathcal{N}_\eps}(\gamma_t) \dd t,$$ where the integrand is often called a \emph{Lagrangian}.  
This framework is covered in detail in \cite[Chapter~7]{villani2008optimal}. It can be easily verified that $\|v\|^2 + \iota_{\mathcal{N}_\eps}(x)$
is convex and superlinear in $v$. 

\begin{lemma}
For any $x \in \operatorname{int}(\mathcal{N}_\eps)$, the interior of $\mathcal{N}_\eps$, assume that there is a unique geodesic $\gamma_t^\star$, i.e., a unique minimizer in the definition of $\ce(x,y)$, joining $x$ to a given $y \in \mathcal{N}_\eps$. Then the cost $\ce(x,y)$ is $C^1$ in $x$ and 
\[
\nabla_x \ce(x,y) = - 2 \dot \gamma_t^\star \big\vert_{t=0}.
\]
\end{lemma}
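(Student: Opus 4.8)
The plan is to recognize this as a classical first variation / envelope theorem result for action-minimizing costs, following the Lagrangian framework of \cite[Chapter~7]{villani2008optimal}. The cost $\ce(x,y)$ is the value of the action-minimization problem, and the claim is that it is differentiable in the starting point with gradient given by (minus twice) the initial velocity of the optimal curve. This is the standard Hamilton--Jacobi relationship between the value function and the momentum of the optimal trajectory.

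\medskip

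\noindent\textbf{Approach.} First I would record the structure of the problem: since $x\in\operatorname{int}(\mathcal{N}_\eps)$, in a neighborhood of $x$ the penalty term $\iota_{\mathcal{N}_\eps}$ is identically zero along admissible curves staying near $x$, so locally the minimizer $\gamma^\star$ solves the free (unconstrained) action problem $\int_0^1 \|\dot\gamma_t\|^2\,\mathrm{d}t$. The Euler--Lagrange equation for this Lagrangian gives $\ddot\gamma^\star_t = 0$ near $t=0$, i.e. the optimal curve leaves $x$ as a straight line, consistent with the remark preceding the lemma. The key structural facts I would invoke are: (i) the Lagrangian $\|v\|^2+\iota_{\mathcal{N}_\eps}(x)$ is convex and superlinear in $v$ (already noted in the excerpt), guaranteeing existence of minimizers and applicability of the theory; and (ii) uniqueness of the geodesic $\gamma^\star$ joining $x$ to $y$, which is assumed in the hypothesis and is precisely what is needed to get differentiability rather than mere sub/superdifferentiability.

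\medskip

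\noindent\textbf{Key steps in order.} (1) Fix $y\in\mathcal{N}_\eps$ and perturb the starting point from $x$ to $x+he$ for a unit vector $e$ and small $h$. Build a competitor curve $\gamma^h$ by a smooth deformation of $\gamma^\star$ that reparametrizes/translates its initial portion so that $\gamma^h_0 = x+he$ while $\gamma^h_1=y$, keeping the curve inside $\mathcal{N}_\eps$ (possible since $x$ is interior, so for small $h$ the perturbed initial segment stays in the pipe and $\iota_{\mathcal{N}_\eps}$ contributes nothing). (2) Compute $\mathcal{A}(\gamma^h)-\mathcal{A}(\gamma^\star)$ to first order in $h$; the first variation of the action with free left endpoint yields the boundary term $-2\,\dot\gamma^\star_t|_{t=0}\cdot (he) + o(h)$, the bulk term vanishing because $\gamma^\star$ satisfies Euler--Lagrange. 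This gives the upper bound $\ce(x+he,y)\le \ce(x,y) - 2\dot\gamma^\star_0\cdot(he)+o(h)$, hence a bound on the one-sided superdifferential. (3) For the matching lower bound, use that $\ce(\cdot,y)$ is an infimum of the smooth affine-in-endpoint quantities, giving semiconcavity; combined with the reverse perturbation (perturbing from $x+he$ back toward $x$) one obtains the opposite inequality, so the two one-sided directional derivatives agree and equal $-2\dot\gamma^\star_0\cdot e$. (4) Since this holds for every direction $e$ with a linear-in-$e$ derivative, $\ce(\cdot,y)$ is differentiable at $x$ with $\nabla_x\ce(x,y)=-2\dot\gamma^\star_t|_{t=0}$; continuity of this gradient in $x$ (the $C^1$ claim) follows from stability/continuity of the unique minimizer $\gamma^\star$ under the uniqueness hypothesis, via a standard $\Gamma$-convergence or compactness argument on minimizing curves.

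\medskip

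\noindent\textbf{Main obstacle.} The delicate point is step (3)--(4): upgrading the easy one-sided (superdifferential) bound, which any competitor construction gives for free, into genuine differentiability and then $C^1$ regularity. The superlinearity and convexity of the Lagrangian deliver semiconcavity of the value function and hence the existence of the derivative wherever the minimizer is unique, but making the continuity of $x\mapsto \dot\gamma^\star_0$ rigorous requires showing that the unique minimizers vary continuously with the endpoint. I expect this to rest on a compactness argument: minimizing sequences of curves are equi-bounded in $H^1$ by superlinearity, any limit is admissible and minimizing by lower semicontinuity of the action (using that $\iota_{\mathcal{N}_\eps}$ is lower-semicontinuous, as noted in the excerpt), and uniqueness forces the whole family to converge, whence the initial velocities converge. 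This is exactly the envelope-theorem machinery of \cite[Chapter~7]{villani2008optimal}, so I would lean on that reference for the technical continuity statement rather than reprove it from scratch.
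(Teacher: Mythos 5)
Your proposal is correct and follows essentially the same route as the paper: build a competitor curve by perturbing the unique geodesic near its starting point, use the first-variation formula to obtain the one-sided bound $\ce(x+he,y)\le \ce(x,y)-2\langle\dot\gamma_0^\star,he\rangle+o(h)$, then obtain the matching bound by the reverse perturbation together with continuous dependence of the (unique) geodesic on its endpoint. Your additional remarks (Euler--Lagrange structure on the interior straight segment, and the $H^1$-compactness argument making the continuity of $x\mapsto\dot\gamma_0^\star$ rigorous) only flesh out steps the paper asserts without detail.
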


\begin{proof}
Under the assumption of a unique optimizer $\gamma_t^\star$ in (\ref{eq: cost}), we get 
\[
\ce(x,y)= \int_0^1 \|\dot\gamma_t^\star\|^2 \dd t.
\]
To obtain the derivative, first pick $x' \in \Ne$ sufficiently close to $x$, such that there exists a unique geodesic connecting $x$ and $x'$ in $\Ne$. We will establish the limit as $x'\to x$. Then, due to the continuous dependence of $\gamma^\star_t$ on $x$, we can ``slightly modify'' $\gamma^\star_t$ to become $\gamma_t'$, which now connects $x'$ and $y$, instead of $x$ and $y$, and thus 
\begin{equation*}
    \ce\rnd{x', y} \leq \int_0^1 \|{\dot{\gamma}_t}'\|^2 \dd t,
\end{equation*}
as the path $\gamma_t'$ might not be optimal.
The formula of first variation then gives 
\[
\ce(x',y) \le \int_0^1 \left(\|\dot\gamma_t^\star\|^2 - 2 \langle \dot\gamma_0^\star, x'-x \rangle\right) \mathrm{d}t + \mathrm{o}\left( \sup_{0\le t\le 1} \|\gamma_t^\star - \gamma_t'\|\right), 
\]
which quantifies the difference between the action of the optimal curve connecting $x'$ and $y$, and the action of the slightly perturbed curve $\gamma_t'$. \\

A suitable choice of $\gamma'_t$ ensures that $\sup_{0\le t\le 1} \|\gamma_t^\star - \gamma'_t\|= \|x'-x\|$. 
Therefore, we obtain by subtracting the equality $\ce(x,y)= \int_0^1 \|\dot\gamma_t^\star\|^2 \dd t$ from the above inequality
\[
\ce(x',y) -\ce(x,y)  \le - 2 \langle \dot\gamma_0^\star, x'-x \rangle + \mathrm{o}\left(\|x'-x\| \right), 
\]
which establishes that  $- 2  \dot\gamma_0^\star$ is an upper gradient of $\ce(x,y)$. \\

By exchanging the roles of $x$ and $x'$, coupled with the continuity of the geodesics, a similar argument then yields a matching lower bound, and the claim follows.
\end{proof}

\subsection{Existence of an optimal map}

To prove the existence of a unique coupling $\pi^\star_\eps$ in (\ref{eq: OTeps}), one possibility is to prove the existence of a unique map $T$ such that 
each point $x \in \Ne$ is sent to a unique $y=T(x) \in \Ne$. Given our previous discussion on the cost function, the standard proofs for uniqueness of a coupling  do not apply in this setting.
Indeed, the latter proofs would usually try to extract such a map $T$ from the dual formulation \eqref{eq: dual}, i.e., from the relationship
\[
\phi(x) + \phi^c( y) = \ce(x,y) \quad \text{ for } (x,y) \in \operatorname{spt}(\pi_\eps^\star).
\]
The idea of deriving $y=T(x)$ is to argue along the lines of the implicit function theorem. 
Assuming differentiability of $\phi$, the above equality gives
\[
\nabla\phi(x) = \nabla_x \ce(x,y), 
\]
from which the transport plan could be obtained if for every fixed $x \in \Ne$, the map $y \mapsto \nabla_x \ce(x, y)$ was injective. However, contrary to the case covered in \cite[Remark~10.32]{villani2008optimal}, the minimizing curves $\gamma_t^\star$ are not uniquely determined by their starting point $x$ and their initial velocity $\res{\dot{\gamma}_t^\star}{t = 0}$, hence the set of $y$ corresponding to a pair $\rnd{x, \res{\dot{\gamma}_t^\star}{t = 0}}$ is generally not a singleton. \\

The issue of absence of injectivity also appears in the case of \textit{branching geodesics}, which has attracted some attention, see for instance \cite{cavalletti2012optimal} and \cite{erbar2021gradient}. This means that for two geodesics $\gamma_t^\star$ and $\beta_t^\star$ with the same starting point $x$ and initial velocities $\res{\dot{\gamma}_t^\star}{t = 0} = \res{\dot{\beta}_t^\star}{t = 0}$, there exists a point in time $t_0 \in \rnd{0, 1}$, such that 
\begin{equation*}
    \forall t \in \sqr{0, t_0}: \, \gamma_t^\star = \beta_t^\star \quad \text{and} \quad \forall t \in \left(t_0, 1\right]: \, \gamma_t^\star \neq \beta_t^\star.
\end{equation*}
In the network setting this phenomenon arises as soon as there exists at least one junction (of at least three pipes), such as the example in following graph:

\begin{center} 
    \begin{tikzpicture}[scale=2, every node/.style={font=\small}]
        \coordinate (x) at (-1,0); \coordinate (junction) at (0,0);
        \coordinate (gamma) at (0.71,0.71);
        \coordinate (beta) at (0.71,-0.71);
        \draw[gray!60, dashed, line width=0.5pt] ([yshift=5pt]x)--([xshift=-3.54pt,yshift=5pt]junction);
        \draw[line width=0.9pt, babyblue] ([yshift=0.22pt]x)--([yshift=0.22pt]junction); \draw[line width=0.9pt, red] ([yshift=-0.22pt]x)--([yshift=-0.22pt]junction);
        \draw[gray!60, dashed, line width=0.5pt] ([yshift=-5pt]x)--([xshift=-2.5pt,yshift=-5pt]junction);
        \draw[gray!60, dashed, line width=0.5pt] ([xshift=-3pt,yshift=4.7pt]junction)--([xshift=-3.54pt,yshift=3.54pt]gamma);
        \draw[line width=1.6pt, babyblue] (junction)--(gamma);
        \draw[gray!60, dashed, line width=0.5pt] ([xshift=7.07pt]junction)--([xshift=3.54pt,yshift=-3.54pt]gamma);
        \draw[gray!60, dashed, line width=0.5pt] ([xshift=-3pt,yshift=-4.7pt]junction)--([xshift=-3.54pt,yshift=-3.54pt]beta);
        \draw[line width=1.6pt, red] (junction)--(beta);
        \draw[gray!60, dashed, line width=0.5pt] ([xshift=7.07pt]junction)--([xshift=3.54pt,yshift=3.54pt]beta);
        \filldraw[black] (x) circle (1pt) node[left] {$x$};
        \filldraw[babyblue] (gamma) circle (1pt) node[right] {$y^{\gamma}$};
        \filldraw[red] (beta) circle (1pt) node[right] {$y^{\beta}$};
    \end{tikzpicture}
\end{center}


\subsubsection{Main results}

We start by establishing that the optimal transport cost (\ref{eq: OTeps}) converges for $\eps \to 0$. 
To this end, let us first recall that a measure $\mu$ is called atomeless if for all $\mu$-measurable sets $A \subset \R^3$ with $\mu\rnd{A} > 0$, there exists a $\mu$-measurable set $B \subset A$ with $0 < \mu\rnd{B} < \mu\rnd{A}$.

\begin{proposition}
    Assume that the sequences $(\mue)$ and $(\nue)$ are sequences of absolutely continuous probability measures with respect to the Lebesgue measure,  that further converge weakly to atomeless probability measures $\mu_0, \nu_0$ supported on $\No$.  Then, 
    \[
    \operatorname{OT}(\mue, \nue, \ce) \to \operatorname{OT}(\muo, \nuo, \co), \quad \text{ as } \eps \to 0.
    \]
\end{proposition}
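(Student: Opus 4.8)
The plan is to bypass the $c$-cyclical monotonicity machinery where possible and instead exploit the metric structure of the two transport problems together with a single geometric object: the nearest-point projection $P_\eps \colon \Ne \to \No$ onto the collapsed graph. Writing $d_\eps(x,y) := \sqrt{\ce(x,y)}$ and $d_0(x,y):=\sqrt{\co(x,y)}$, both are geodesic distances (finite since the network is bounded and connected, symmetric, and satisfying the triangle inequality), so that $\sqrt{\operatorname{OT}(\cdot,\cdot,\ce)}$ and $\sqrt{\operatorname{OT}(\cdot,\cdot,\co)}$ are the associated $2$-Wasserstein distances and in particular obey the Wasserstein triangle inequality. My goal is then to prove matching $\limsup$ and $\liminf$ bounds, each of which I reduce to the stability of optimal transport on the \emph{fixed} limiting graph $\No$ with the \emph{fixed} continuous cost $\co$ under weak convergence of the marginals.

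First I would record three geometric facts. (i) For $x\in\Ne$ the segment joining $x$ to $P_\eps(x)$ stays within distance $\eps$ of $\No$, hence inside $\Ne$, so $d_\eps(x,P_\eps(x))\le\norm{x-P_\eps(x)}\le\eps$. (ii) For two points $a,b\in\No$ the tube geodesic can only be shorter than the graph geodesic, and the gain comes solely from \emph{cutting corners} inside the $\eps$-balls around the finitely many junctions; hence $0\le d_0(a,b)-d_\eps(a,b)\le C\eps$ with $C$ depending only on the finite combinatorics of the graph. (iii) Since $\mue$ is absolutely continuous, the cut locus of $P_\eps$ is Lebesgue-null and $P_\eps$ is defined $\mue$-a.e.; moreover $(P_\eps)_\#\mue \rightharpoonup \muo$ because $\norm{x-P_\eps(x)}\le\eps$ on $\operatorname{spt}(\mue)=\Ne$ and $\mue\rightharpoonup\muo$, and likewise $(P_\eps)_\#\nue\rightharpoonup\nuo$.

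For the upper bound I glue three couplings along $\mue\to(P_\eps)_\#\mue\to(P_\eps)_\#\nue\to\nue$: the two outer steps are realized by the graph of $P_\eps$ and cost at most $\eps$ in $d_\eps$ by (i), while the middle step is bounded using $\ce\le\co$ on $\No\times\No$, a consequence of (ii). The Wasserstein triangle inequality then gives $\sqrt{\operatorname{OT}(\mue,\nue,\ce)}\le 2\eps+\sqrt{\operatorname{OT}((P_\eps)_\#\mue,(P_\eps)_\#\nue,\co)}$, and the right-hand side converges to $\sqrt{\operatorname{OT}(\muo,\nuo,\co)}$ by (iii) and stability. For the lower bound I take an optimal plan $\pie^\star$ and push it forward by $P_\eps\times P_\eps$ to a coupling $\hat\pi_\eps$ of $(P_\eps)_\#\mue$ and $(P_\eps)_\#\nue$; combining (i), (ii) and the boundedness of the network yields the pointwise estimate $\co(P_\eps(x),P_\eps(y))\le \ce(x,y)+C'\eps$, so that $\operatorname{OT}((P_\eps)_\#\mue,(P_\eps)_\#\nue,\co)\le\int\co\,\dd\hat\pi_\eps\le\operatorname{OT}(\mue,\nue,\ce)+C'\eps$. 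Passing to the limit with (iii) and stability gives $\operatorname{OT}(\muo,\nuo,\co)\le\liminf_{\eps\to0}\operatorname{OT}(\mue,\nue,\ce)$, and together the two bounds prove the claim; the atomlessness of $\muo,\nuo$ is what keeps the limiting problem well-posed and is what the companion map-existence results rely on.

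The step I expect to be the crux is the uniform corner-cutting estimate (ii): one must show that, although the tube geodesic between two graph points can be strictly shorter than the graph geodesic, the total saving is $O(\eps)$ \emph{uniformly}, which requires controlling the local geometry inside each fattened junction and summing over the finitely many vertices. A secondary technical point is justifying that $P_\eps$ is defined $\mue$- and $\nue$-almost everywhere, which is exactly where absolute continuity of the prelimit measures enters.
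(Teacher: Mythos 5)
Your proposal is correct and follows essentially the same route as the paper's proof: project onto the limiting graph via the nearest-point map $P$, establish a two-sided pointwise comparison between $\ce$ and $\co(P(\cdot),P(\cdot))$ up to a vanishing corner-cutting error, and conclude via stability of optimal transport on the fixed graph $(\No,\co)$ under weak convergence of the pushforward marginals. The differences are presentational rather than substantive: you package the upper bound through the Wasserstein triangle inequality (the paper integrates the pointwise bound against couplings, implicitly using the same gluing), your error terms are the correct $O(\eps)$ at the cost level (the paper's stated $O(\eps^2)$ bounds are loose but equally harmless), and your displacement argument $\norm{x-P_\eps(x)}\le\eps$ for $(P_\eps)_\#\mue\rightharpoonup\muo$ neatly sidesteps the continuous-mapping-theorem/atomlessness step that the paper invokes.
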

\begin{proof}
Denote by $P$ the projection of a point from $\Ne$ to the closest point in $\No$, with respect to the Euclidean distance, and in the case of the closest point in $\No$ not being unique (which can happen at the junction of pipes), pick an arbitrary point from the contenders in $\No$. \\

For Lebesgue-almost every pair $(x, y) \in \Ne \times \Ne$, it holds
$$\ce(x,y) \le \co(P(x),P(y)) + 2\eps^2, $$ as this construction amounts to picking a non-optimal path between $x$ and $y$. Recall that for any $\mu$-nullset $E$, integrating a function $f$ with $f(x)= +\infty$ for all $x\in E$ evaluates at 0, so $\int_E f(x) \dd\mu(x)=0$. Therefore, we do not need the previous inequality for $x$ or $y \in \R^3 \setminus \Ne$.
This then yields
\begin{align*}
    \int_{\R^3 \times \R^3} \ce\rnd{x, y} \dd \pie\rnd{x, y} & \le \int_{\R^3 \times \R^3} c_0(P(x),P(y)) \dd \pie(x,y) + 2 \eps^2 \\
    & = \int_{\R^3 \times \R^3} c_0 \dd (P, P)_\#\pie + 2 \eps^2,
\end{align*}
where for any probability measure $\mu \in \Prb{X}$, the notation $F_\# \mu$ for a measurable function $F: X \rightarrow Y$ corresponds to the pushforward measure defined as $F_\# \mu \in \Prb{Y}$ with $F_\# \mu\rnd{B} \coloneqq \mu\rnd{F^{-1}\rnd{B}}$ for measurable $B \subseteq Y$. \\

Now, for any coupling $\pie$ of the marginals $\mue, \nue $, the term $(P,P)_\#\pi_\eps$ corresponds to a coupling between $P_\# \mue \in \Prb{\No}$ and  $P_\# \nue \in  \Prb{\No}$.  Optimizing over $\pie$ is thus equivalent to optimizing over couplings of $P_\# \mue $ and  $P_\# \nue $ and so, we obtain
\begin{equation*}
    \inf_{\pie \in \Pi(\mu_\eps, \nu_\eps)} \crl{\int_{\R^3 \times \R^3} c_0 \dd (P, P)_\#\pie} = \inf_{\pi_\eps^P \in \Pi(P_\# \mu_\eps, P_\# \nu_\eps)} \crl{\int_{\R^3 \times \R^3} c_0 \dd \pi_\eps^P}.
\end{equation*}

Further, $P_\# \mue, P_\# \nue $
converge weakly to $P_\# \muo = \muo$ and $P_\# \nuo = \nuo$, by the continuous mapping theorem. Note that the assumption on the absence of atoms of the limits $\muo, \nuo$ is required here, because of the absence of continuity of the projection $P$ at junctions, which are not charged as $P$ is continuous $\muo$- and $\nuo$-almost everywhere. \\

One can then invoke Theorem~1.51 in \cite{Santambrogio2015} (where we once again use the argument that $P_\#\mue (\R^3 \setminus \No)=0 $ and $P_\#\nue (\R^3 \setminus \No)=0 $ for all $\eps>0$, as well as $P_\#\muo (\R^3 \setminus \No)=0 $ and $P_\#\nuo (\R^3 \setminus \No)=0$ to circumvent the non-compactness of $\R^3$ and $\co$ being non-continuous) to conclude that 
\[
\inf_{\pi_\eps^P \in \Pi(P_\# \mu_\eps, P_\# \nu_\eps)} \crl{\int_{\R^3 \times \R^3} c_0 \dd \pi_\eps^P} \to
\inf_{\pio \in \Jprb{\muo, \nuo}} \crl{\int_{\R^3 \times \R^3} c_0 \dd \pio} = \operatorname{OT}\rnd{\muo, \nuo, \co},
\]
as $\eps \to 0$, which yields
\begin{equation*}
    \operatorname{OT}\rnd{\mue, \nue, \ce} \leq \operatorname{OT}\rnd{\muo, \nuo, \co}.
\end{equation*}

In the other direction, for Lebesgue-almost every pair $(x, y) \in \Ne \times \Ne$, we have $$\ce(x,y) \geq \co(P(x),P(y)) - K\eps^2,$$ where $K>0$ is some constant depending on the network. This constant $K$ is necessary as the network $\Ne$ allows for shorter paths around a corner (at junctions) compared to the graph $\No$. Since the number of pipes and junctions is finite, the amount of path length that can be ``saved'' in $\Ne$, compared to $\No$, can be bounded from above. \\

The length of the path obtained by projecting first two points $x, y \in \Ne$ on $\No$, and then joining them by the shortest path on $\No$ must thus stay within a term $K\eps^2$. The claim then follows from a similar argument as above. We have
\begin{align*}
    \int_{\R^3 \times \R^3} \ce\rnd{x, y} \dd \pie\rnd{x, y} & \geq \int_{\R^3 \times \R^3} c_0(P(x),P(y)) \dd \pie(x,y) - K \eps^2 \\
    & = \int_{\R^3 \times \R^3} c_0 \dd (P, P)_\#\pie - K \eps^2
\end{align*}
with the equality from before
\begin{equation*}
    \inf_{\pie \in \Pi(\mu_\eps, \nu_\eps)} \crl{\int_{\R^3 \times \R^3} c_0 \dd (P, P)_\#\pie} = \inf_{\pi_\eps^P \in \Pi(P_\# \mu_\eps, P_\# \nu_\eps)} \crl{\int_{\R^3 \times \R^3} c_0 \dd \pi_\eps^P},
\end{equation*}
where the convergence of the right-hand-side term for $\eps \to 0$ gives us the desired inequality
\begin{equation*}
    \operatorname{OT}\rnd{\mue, \nue, \ce} \geq \operatorname{OT}\rnd{\muo, \nuo, \co}.
\end{equation*}
Therefore, for $\eps \to 0$, based on the sandwich theorem, we can conclude
\begin{equation*}
    \operatorname{OT}(\mue, \nue, \ce) \to \operatorname{OT}(\muo, \nuo, \co).\qedhere
\end{equation*}
\end{proof}

The question of establishing the convergence of $\pi_\eps$ as $\eps \to 0$ is thus a stability issue with respect to both measures $\mue, \nue$ and cost $\ce$. This question is addressed in Chapter~5 of \cite{villani2008optimal}, for instance. This being said, the results therein do not apply directly to our context because our cost function does not converge uniformly on $\Ne$. \\

Our main theorem is the following.

\begin{theorem}
Let $\rnd{\mu_\eps}_\eps, \rnd{\nu_\eps}_\eps$ be sequences of probability measures with support $\mathcal{N}_\eps$, and consider the sequence of cost functions $\rnd{\ce}_\eps$ defined in \eqref{eq: cost}.
Moreover, by $\pi_\eps^\star$ we denote any optimizer of problem \eqref{eq: OTeps}, which might not be uniquely determined.
If $\pi_0^\star$, the solution of \eqref{eq: OTeps} for $\eps =0$, is unique, then (all)
\[
    \pi_\eps^\star \to \pi_0^\star
\]
weakly as $\eps\to 0$.
Otherwise, each convergent subsequence of $\pi_\eps^\star$ converges to one of the optimizers $\pio^\star$ of \eqref{eq: OTeps}.
\end{theorem}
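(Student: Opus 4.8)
The plan is to argue by compactness. I would first establish that the family $\rnd{\pi_\eps^\star}_\eps$ is precompact in the weak topology, then show that every subsequential weak limit is an optimal coupling for the limiting problem $\operatorname{OT}\rnd{\muo, \nuo, \co}$, and finally upgrade subsequential convergence to convergence of the whole sequence whenever the limit is unique. The key input is the value convergence $\operatorname{OT}\rnd{\mue, \nue, \ce} \to \operatorname{OT}\rnd{\muo, \nuo, \co}$ already furnished by the preceding proposition, under whose standing assumptions (absolute continuity of $\mue,\nue$ and atomlessness of $\muo,\nuo$) I would work.

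I would fix $\eps_0>0$ and note that for all $\eps \le \eps_0$ the plan $\pi_\eps^\star$ is concentrated on $\Ne \times \Ne \subset \mathcal{N}_{\eps_0} \times \mathcal{N}_{\eps_0}$, a single compact subset of $\R^3 \times \R^3$. Hence the family is tight, and by Prokhorov's theorem every sequence $\eps \to 0$ admits a subsequence $\eps_k$ along which $\pi_{\eps_k}^\star \to \pi$ weakly for some $\pi \in \Prb{\R^3 \times \R^3}$. Since the coordinate projections are continuous and $\mue \to \muo$, $\nue \to \nuo$ weakly, the marginals of $\pi$ are $\muo$ and $\nuo$, so $\pi \in \Jprb{\muo, \nuo}$.

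The heart of the argument, and the main obstacle, is to show that any such limit $\pi$ is optimal for $\co$; this is delicate precisely because $\ce$ does not converge to $\co$ uniformly and $\co$ is merely lower semicontinuous (it equals $+\infty$ off $\No \times \No$). I would reuse the pointwise lower bound from the proposition, $\ce(x,y) \ge \co\rnd{P(x), P(y)} - K\eps^2$, valid for $\pi_{\eps_k}^\star$-almost every $(x,y)$ (the set where the nearest-point projection $P$ onto $\No$ is non-unique being negligible under the absolute-continuity assumption). Integrating against $\pi_{\eps_k}^\star$ yields
\[
\operatorname{OT}\rnd{\mu_{\eps_k}, \nu_{\eps_k}, c_{\eps_k}} = \int c_{\eps_k} \dd \pi_{\eps_k}^\star \ge \int \co \dd (P, P)_\# \pi_{\eps_k}^\star - K \eps_k^2 .
\]
Because $\norm{x - P(x)} \le \eps_k$ on $\operatorname{spt}\rnd{\pi_{\eps_k}^\star}$ and every bounded continuous test function is uniformly continuous on the fixed compact $\mathcal{N}_{\eps_0} \times \mathcal{N}_{\eps_0}$, the pushforwards $(P, P)_\# \pi_{\eps_k}^\star$ share the weak limit $\pi$ of $\pi_{\eps_k}^\star$. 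Passing to the $\liminf$, and invoking lower semicontinuity of $\co$ together with the Portmanteau theorem for nonnegative lower semicontinuous integrands, I obtain
\[
\operatorname{OT}\rnd{\muo, \nuo, \co} = \lim_{k} \operatorname{OT}\rnd{\mu_{\eps_k}, \nu_{\eps_k}, c_{\eps_k}} \ge \int \co \dd \pi .
\]
Since $\pi \in \Jprb{\muo, \nuo}$ forces the reverse inequality $\int \co \dd \pi \ge \operatorname{OT}\rnd{\muo, \nuo, \co}$, equality holds and $\pi$ is an optimizer of the limit problem.

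Finally I would conclude. If $\pi_0^\star$ is the unique optimizer of $\operatorname{OT}\rnd{\muo, \nuo, \co}$, then every weakly convergent subsequence of the precompact family $\rnd{\pi_\eps^\star}_\eps$ has limit $\pi_0^\star$; as all subsequential limits coincide, the full sequence converges, $\pi_\eps^\star \to \pi_0^\star$. If the optimizer is not unique, the same reasoning shows that each convergent subsequence converges to some optimizer $\pi_0^\star$, which is exactly the stated conclusion. As an alternative to this value-convergence route, one could instead pass the $\ce$-cyclical monotonicity of each $\pi_{\eps_k}^\star$ (Theorem~\ref{thm: KantDual}) to the limit to show directly that $\pi$ is $\co$-cyclically monotone, hence optimal; the same projection estimate is what controls the mismatch between $\ce$ and $\co$ in that passage.
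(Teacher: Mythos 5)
Your proposal is correct, but it takes a genuinely different route from the paper. The paper never uses the value convergence $\operatorname{OT}(\mue,\nue,\ce)\to\operatorname{OT}(\muo,\nuo,\co)$ from the preceding proposition: after the same tightness/Prokhorov/marginal-identification steps (where your tightness argument via the uniform compact support $\mathcal{N}_{\eps_0}\times\mathcal{N}_{\eps_0}$ is in fact cleaner than the paper's appeal to Prokhorov plus Villani's Lemma~4.4), the paper proves directly that any subsequential limit $\pi_0^*$ is $\co$-cyclically monotone. It does so by introducing, for each $N$ and $\delta>0$, the closed set $C_\delta(N)$ of $N$-tuples satisfying $\delta$-approximate $\co$-cyclical monotonicity, showing $\operatorname{spt}(\pi_\eps^{\otimes N})\subseteq C_\delta(N)$ for $\eps$ small (using $\ce\le\co$ on $\No$ together with uniform convergence $\ce\to\co$ on $\No\times\No$), passing to the weak limit via closedness of $C_\delta(N)$, and letting $\delta\to 0$; optimality then follows from the equivalence in Theorem~\ref{thm: KantDual}. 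This is precisely the alternative you sketch in your last sentence. The trade-off concerns hypotheses: your sandwich argument (projection lower bound, Portmanteau for nonnegative lower semicontinuous integrands, reverse inequality from $\pi\in\Jprb{\muo,\nuo}$) is shorter and standard, but it imports the proposition's standing assumptions --- absolute continuity of $\mue,\nue$ and atomlessness of $\muo,\nuo$ --- which the theorem itself does not impose; the cyclical-monotonicity route avoids them entirely because it never needs convergence of the optimal values. One point in your argument deserves sharper phrasing: the plans $\pi_{\eps_k}^\star$ are typically singular on $\R^3\times\R^3$ even when their marginals have Lebesgue densities, so the exceptional set for the inequality $\ce(x,y)\ge \co(P(x),P(y))-K\eps^2$ cannot be dismissed by absolute continuity of $\pi_{\eps_k}^\star$ itself; rather, that set is of the product form $(E\times\R^3)\cup(\R^3\times E)$ with $E\subset\R^3$ Lebesgue-null (the non-uniqueness set of $P$), whence $\pi_{\eps_k}^\star\bigl((E\times\R^3)\cup(\R^3\times E)\bigr)\le \mu_{\eps_k}(E)+\nu_{\eps_k}(E)=0$ by absolute continuity of the \emph{marginals} --- this is what makes your integration step legitimate.
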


\begin{proof}
As $\mu_\eps$ and $\nu_\eps$ are weakly convergent sequences, by
Prokhorov’s theorem they constitute tight sets, so for all $\xi > 0$ there exist compact sets $K_\mu, K_\nu \subset \R^3$ such that
\begin{equation*}
    \sup_{0 < \eps < 1} \crl{\mue\rnd{\R^3 \backslash K_\mu}}, \sup_{0 < \eps < 1} \crl{\nue\rnd{\R^3 \backslash K_\nu}} < \xi.
\end{equation*}
Then, by Lemma 4.4 in \cite{villani2008optimal}, the couplings $\pi_\eps \in \Jprb{\mue, \nue}$ all lie in a tight set of $\R^3 \times \R^3$. \\

First, let us recall that optimizers $\pi_\eps^\star$ of (\ref{eq: OTeps}) exist, i.e., the infimum is attained as the set of couplings is compact and $\pi\mapsto \int c \dd\pi$ is a lower-bounded, linear functional. Due to the tightness of the couplings, applying Prokhorov’s theorem to a sequence of optimizers $\pi_\eps^\star$ , we can extract a subsequence, still denoted $\pi_\eps^\star$ for simplicity, which converges weakly to some $\pi_0^* \in \Pi(\mu_0, \nu_0)$. We aim at proving that this limiting coupling $\pi_0^*$ is an optimizer of (\ref{eq: OTeps}) for $\eps = 0$, by showing that it is $c_0$-cyclically monotone. \\

Note that in $\Ne\times\Ne$, $c_{\!\eps}$ is continuous and upper bounded, implying that  $\operatorname{OT}\rnd{\mue, \nue, \ce}$ is bounded as well. We can thus apply Theorem~\ref{thm: KantDual} and as $\pi_\eps^\star$ is optimal, each $\pi_\eps^\star$ is concentrated on a $\ce$-cyclically monotone set. Moreover, for any $N \geq 1$ pick arbitrary points $(x_1, y_1), \ldots, (x_N, y_N)$ from the support of $\pi_\eps^\star$ and denote this set of points by $\pi_\eps^{\otimes N}$. Additionally, define the set $C_{\!\eps}(N)$ as the set of all sets of $N$ points $\crl{(\tilde x_1,\tilde y_1), \ldots,(\tilde x_N,\tilde y_N)}$ from $\Ne\times \Ne$ such that 
\begin{equation}
\label{eq: cyclMono}
    \sum_{i=1}^N c_{\!\eps}(\tilde x_i, \tilde y_i) \le  \sum_{i=1}^N c_{\!\eps}(\tilde x_i, \tilde y_{i+1}), 
\end{equation}
with the convention $\tilde y_{N+1}=\tilde y_1$. \\

By the $\ce$-cyclical monotonicity of $\pi_\eps^\star$, the set $\pi_\eps^{\otimes N}$ is thus concentrated on $C_{\!\eps}(N)$.
Further, as the  cost function $c_{\!\eps}$ is continuous on $\mathcal{N}_\eps\times \mathcal{N}_\eps$,
the set of sets $C_{\!\eps}(N)$ is closed. As a next step, we show that $\pi_\eps^{\otimes N}$ is also concentrated on another set. Let $\delta > 0$ and $N$ be given, define
$C_\delta(N)$ as the set of all sets of $N$ points $\crl{(\hat{x}_1,\hat y_1), \ldots, ( \hat x_N,\hat y_N)}$ from $\Ne \times \Ne$ such that
\begin{equation}
\label{eq: cyclMono2}
    \sum_{i=1}^N c_0(\hat x_i,\hat y_i) \le  \sum_{i=1}^N c_0(\hat x_i, \hat y_{i+1}) + \delta,
\end{equation}
which corresponds to a $\delta$-approximate $c_0$-cyclical monotonicity. We now want to prove that, for $\eps$ small enough, the set $\pi_\eps^{\otimes N}$ is concentrated on $C_\delta(N)$. We do so by distinguishing two cases: First, either at least one of the $x_i$'s or the $y_i$'s from $\pi_\eps^{\otimes N}$ is not in $\mathcal{N}_0$, in which case both sides of the inequality \eqref{eq: cyclMono2} are $+\infty$ and the inequality holds trivially. Second, if $\pi_\eps^{\otimes N} \subset \No \times \No$ observe that for all $x, y \in \No$ we get
\[
\ce(x,y) \le  \co(x,y),
\]
which together with the $\ce$-cyclical monotonicity of $\pi_\eps^\star$, so \eqref{eq: cyclMono}, results in
\begin{equation*}
    \sum_{i=1}^N \ce(x_i, y_i) \le  \sum_{i=1}^N \ce(x_i, y_{i+1}) \le  \sum_{i=1}^N \co(x_i, y_{i+1}).
\end{equation*}
Finally, $c_{\!\eps}(x,y)$ converges uniformly to $c_0(x,y)$ for $x,y \in \mathcal{N}_0$, so there exists $\eps>0$ small enough such that for all $x, y \in \No$ we obtain
\[
\lvert c_{\!\eps}(x,y) -c_0(x,y)\vert \le \frac{\delta}{N}.
\]
Therefore, 
\eqref{eq: cyclMono2} indeed also holds for all $x_i, y_i$'s in $\No$, which yields $\operatorname{spt}\rnd{\pi_\eps^{\otimes N}} \subseteq C_\delta\rnd{N}$ for $\eps$ small enough. Since $C_\delta\rnd{N}$ is a closed set, the weak convergence of $\pi_\eps^\star$ to $\pi_0^*$ also implies $\operatorname{spt}\big(\pi_0^{\otimes N}\big) \subseteq C_\delta\rnd{N}$. To show that $C_\delta\rnd{N}$ is closed, a sublevel set argument for the function $f: \rnd{\R^3 \times \R^3}^N \to \R \cup \crl{+ \infty}, \quad \big(\rnd{x_1, y_1}, \dots, \rnd{x_N, y_N}\big) \mapsto \sum_{i=1}^N c_0\rnd{x_i, y_i} - \sum_{i=1}^N c_0\rnd{x_i, y_{i+1}}$ with $C_\delta\rnd{N} = \crlm{z}{f\rnd{z} \leq \delta}$ can be used. \\

As the inequality \eqref{eq: cyclMono2} holds for all $\big(\rnd{x_1, y_1}, \dots, \rnd{x_N, y_N}\big) \in \operatorname{spt}\rnd{\pi_0^{\otimes N}}$ for all $\delta > 0$, in the limit $\delta \to 0$, we get
\begin{equation*}
    \sum_{i=1}^N c_0\rnd{x_i, y_i} \leq \sum_{i=1}^N c_0\rnd{x_i, y_{i+1}}
\end{equation*}
for all $\big(\rnd{x_1, y_1}, \dots, \rnd{x_N, y_N}\big) \in \operatorname{spt}\rnd{\pi_0^{\otimes N}}$. Therefore, $\pi_0^*$ is $c_0$-cyclically monotone, which guarantees the optimality of $\pi_0^*$ in \eqref{eq: OTeps} for $\eps = 0$ by Theorem~\ref{thm: KantDual}. \\

Finally, in the case of a unique $\pi_0^\star$, we conclude the convergence of each subsequence $\pi_\eps^\star$ to $\pi_0^\star$, so that the whole sequence converges to $\pi_0^\star$ by a classical subsequence argument.


\end{proof}

\begin{remark}
In the somewhat trivial case of a network consisting of a single pipe, all the presented results still apply. The unique difference is that then the initial velocity is sufficient to characterize a geodesic, and uniqueness of the transport plan holds for each $\eps>0$ under the assumption of the measures $\mue, \nue$ having a density with respect to the Lebesgue measure.
\end{remark}

\subsection{Towards a dynamical understanding}
\label{sec: DynUnderst}
The issue underlying the absence of uniqueness of optimizers $\pi_\eps^\star$ was pointed out above and relates to the fact that minimizing curves in the network change directions. Still, one intuits that minimal additional knowledge could suffice to still get a transport plan in ``well-behaved'' examples. To make a step in that direction and explain the open questions, let us consider the space-time lifting of the problem, i.e., 
we extend the measures $\mue$ and $\nue$ to $\mu_\eps \otimes \delta_{t=0}$ and $\nu_\eps \otimes \delta_{t=1}$ and utilize the cost function 
\[
    \tilde{c}\big( (x,s), (y,t) \big) = \inf \left\{ \int_s^t \|\dot \gamma_u\|^2 + \iota_{\mathcal{N}_\eps}(\gamma_u) \dd u :    \gamma(s)=x, \gamma(t)=y \right\},
\]
for a piecewise absolutely continuous curve $\gamma_u$ and $s, t \in \sqr{0, 1}$. \\

From the characterization of the dual problem, if the following quantities were well-defined and the function
\[
G_{(x, s)}(y,t):=(y,t) \mapsto 
\begin{pmatrix} \nabla_x \tilde{c}\big((x,s), (y,t)\big)\\
\partial_s \tilde{c}\big((x,s), (y,t)\big)
\end{pmatrix}
\]
was invertible, then the optimal transport map in the space-time domain
is given by 
\[
T(x,t) = G_{(x, s)}^{-1}
\begin{pmatrix} \nabla_x \psi(x,s)\\
\partial_s\psi(x,s)
\end{pmatrix},
\]
where $\psi(x,s)$ is an optimal dual potential.

\subsection{Stability with respect to the gas network topology}

An important question is to know how the optimal gas transport depends on the topology of the network, and in particular how the cost of transport would be affected in case of modifications of the network structure. \\

To better understand this, let us introduce the following setting. We consider two probability measures $\mu,\nu$ that are supported on $\check{\mathcal{N}}_0:=\mathcal{N}_0\setminus\mathcal{M}$, where $\mathcal{M}$ is a set of edges in the central portion of the graph, which are removed to get a new graph structure. Usually, one thinks of $\mu$ as a source and $\nu$ as a target, such that $\mu$ is flowing to $\nu$ through $\mathcal{M}$ (at least partially). As a next step we can add new edges as a substitute for $\mathcal{M}$, which we denote by $\widetilde{\mathcal{M}}$ to get the new graph $\widetilde{\mathcal{N}}:=\mathcal{N}_0\setminus\mathcal{M} \cup \widetilde{\mathcal{M}}$. Note that we require $\widetilde{\mathcal{N}}$ to be still connected to allow for a feasible transport. Then, we define the modified cost function
\[
   \tilde c(x,y) := \inf_{\gamma \in \mathcal{C}(x,y)} \int_0^1 \|\dot \gamma_t\|^2 + \iota_{\widetilde{\mathcal{N}}}(\gamma_t) \dd t,
\]
as well as the modified optimal transport problem 
\[
\operatorname{OT}(\mu, \nu, \tilde{c}) :=\inf_{\pi \in \Pi(\mu, \nu)} \int_{\check{\mathcal{N}}_0\times \check{\mathcal{N}}_0} \tilde{c} \dd \pi.
\]
Recall that $\mu$ and $\nu$ are supported on $\check{\mathcal{N}}_0$ and thus $\pi$ is supported on $\check{\mathcal{N}}_0\times \check{\mathcal{N}}_0$, however the transport through the new section of the graph $\widetilde{\mathcal{M}}$ is accounted for in the cost function $\tilde{c}$ as the feasible shortest-path curves $\gamma$ live on the space domain $\widetilde{\mathcal{N}}$. \\

We denote by $O^\star$ and $\widetilde{O}^\star$ the set of optimal couplings in both problems, i.e., 
\[
O^\star :=\left\{ \pi \in \Pi(\mu, \nu) : \int_{\check{\mathcal{N}}_0\times \check{\mathcal{N}}_0} c \dd \pi = \inf_{\pi \in \Pi(\mu, \nu)} \int_{\check{\mathcal{N}}_0\times \check{\mathcal{N}}_0} {c} \dd \pi\right \},
\]
\[
\widetilde{O}^\star :=\left\{ \tilde{\pi} \in \Pi(\mu, \nu) : \int_{\check{\mathcal{N}}_0\times \check{\mathcal{N}}_0} \tilde{c} \dd\tilde{\pi}= \inf_{\tilde{\pi} \in \Pi(\mu, \nu)} \int_{\check{\mathcal{N}}_0\times \check{\mathcal{N}}_0} \tilde{c} \dd \tilde{\pi}\right \}.
\]
The following theorem relates the values of the optimal costs between the two situations, i.e., before and after modification of the gas network.

\begin{theorem}
\label{thm: Stability}
In the above setting, it holds true that
\begin{align*}
\big \vert\operatorname{OT}(\mu, \nu, \tilde{c})  - \operatorname{OT}(\mu, \nu, {c}) \big \vert
&\le 
\max\crl{
\inf_{\pi \in O^\star} \int_{\check{\mathcal{N}}_0\times \check{\mathcal{N}}_0} \vert \tilde{c}- c \vert \mathrm{d}\pi, 
\inf_{\tilde{\pi} \in \widetilde{O}^\star} \int_{\check{\mathcal{N}}_0\times \check{\mathcal{N}}_0}
\vert \tilde{c}- c \vert \mathrm{d}\tilde{\pi}}\\
&\le 
\|  \tilde{c}- c\|_\infty.
\end{align*}
\end{theorem}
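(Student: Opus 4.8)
The plan is to exploit the single structural feature that makes this estimate elementary: both optimal transport problems are posed over the \emph{same} feasible set $\Pi(\mu,\nu)$, since the marginals $\mu,\nu$ are fixed and only the ground cost changes from $c$ to $\tilde c$. Consequently, any plan that is optimal for one cost is an admissible (though in general suboptimal) competitor for the other, and comparing the two linear functionals on such a plan produces a one-sided bound. Symmetrizing the roles of $c$ and $\tilde c$ then yields the two-sided estimate. This is exactly the classical stability-in-cost argument, adapted to the present network setting.

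Concretely, I would first fix an optimizer $\pi \in O^\star$ for the cost $c$, whose existence follows as in the proof of the previous theorem (lower semicontinuity of the action-type cost together with tightness of $\Pi(\mu,\nu)$ via Prokhorov). Using $\pi$ as a competitor in the $\tilde c$-problem and writing $\tilde c = c + (\tilde c - c)$ on the support of $\pi$ gives
\begin{equation*}
\operatorname{OT}(\mu,\nu,\tilde c) \le \int_{\check{\mathcal N}_0\times\check{\mathcal N}_0} \tilde c \dd\pi = \operatorname{OT}(\mu,\nu,c) + \int_{\check{\mathcal N}_0\times\check{\mathcal N}_0}(\tilde c - c)\dd\pi \le \operatorname{OT}(\mu,\nu,c) + \int_{\check{\mathcal N}_0\times\check{\mathcal N}_0}\vert\tilde c - c\vert\dd\pi .
\end{equation*}
Taking the infimum over $\pi\in O^\star$ bounds $\operatorname{OT}(\mu,\nu,\tilde c) - \operatorname{OT}(\mu,\nu,c)$ by the first term inside the maximum. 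Repeating the argument verbatim with an optimizer $\tilde\pi\in\widetilde O^\star$ of the $\tilde c$-problem, now employed as a competitor for $c$, yields the reverse bound $\operatorname{OT}(\mu,\nu,c) - \operatorname{OT}(\mu,\nu,\tilde c) \le \inf_{\tilde\pi\in\widetilde O^\star}\int\vert\tilde c - c\vert\dd\tilde\pi$. Since $\vert\operatorname{OT}(\mu,\nu,\tilde c) - \operatorname{OT}(\mu,\nu,c)\vert$ equals the larger of these two nonnegative quantities, combining the one-sided estimates gives the first inequality. The second inequality is then immediate, because every $\pi$ (resp. $\tilde\pi$) is a probability measure, whence $\int\vert\tilde c - c\vert\dd\pi \le \|\tilde c - c\|_\infty$, so both infima and their maximum are dominated by $\|\tilde c - c\|_\infty$.

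The only delicate point—and the step I would treat most carefully—is ensuring that the decomposition $\tilde c = c + (\tilde c - c)$ never produces an undefined $(+\infty)-(+\infty)$. This is precisely where the standing hypotheses enter: since both $\mathcal N_0$ and $\widetilde{\mathcal N}$ are connected and contain $\check{\mathcal N}_0$, any pair $(x,y)\in\check{\mathcal N}_0\times\check{\mathcal N}_0$ admits feasible curves in \emph{both} networks, so $c(x,y)$ and $\tilde c(x,y)$ are simultaneously finite there. As every coupling in play is supported on $\check{\mathcal N}_0\times\check{\mathcal N}_0$, the integrands are finite-valued on the relevant supports and all integrals are well defined. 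One should also record that $O^\star$ and $\widetilde O^\star$ are nonempty, so the two infima are taken over nonempty sets; this follows from the same existence argument as above. Once finiteness is secured, the remaining steps are purely arithmetic.
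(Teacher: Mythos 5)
Your proof is correct and follows essentially the same argument as the paper: use an optimizer for one cost as a competitor for the other, bound the resulting one-sided difference by $\int \vert \tilde c - c\vert \dd\pi$, take the infimum over the respective optimal sets, symmetrize, and bound the integral by $\Vert \tilde c - c\Vert_\infty$ since the couplings are probability measures. The extra care you take about nonemptiness of $O^\star$, $\widetilde O^\star$ and about avoiding $(+\infty)-(+\infty)$ on $\check{\mathcal{N}}_0\times\check{\mathcal{N}}_0$ is a sound addition but does not change the route.
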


\begin{remark}
The middle bound expresses the natural intuition that one might have: The difference in cost between the two network topologies depends on the difference between the costs weighted by the mass transported between these points. 
\end{remark}
\begin{remark}[Deleting one connection]
If the change consists in removing a single pipe within the network, then for all $x,y \in \check{\mathcal{N}}_0$ it holds true that
\[
c(x,y) \le \tilde {c}(x,y),
\]
as each path that was going through this pipe now should be at least as long. Thus, 
\[
\operatorname{OT}(\mu, \nu, \tilde{c})  -  \operatorname{OT}(\mu, \nu, {c}) \le \inf_{\pi \in O^\star} \int_{\check{\mathcal{N}}_0\times \check{\mathcal{N}}_0} (\tilde{c}-c)\ \mathrm{d}\pi^\star.
\]
\end{remark}
\begin{proof}[Proof of Theorem~\ref{thm: Stability}] Plugging in the definitions of OT, we get the equality
\[
\operatorname{OT}(\mu, \nu, \tilde{c})  - \operatorname{OT}(\mu, \nu, {c})  = \inf_{\pi \in \Pi(\mu, \nu)} \int_{\check{\mathcal{N}}_0\times \check{\mathcal{N}}_0} \tilde{c} \dd \pi -  \inf_{\pi \in \Pi(\mu, \nu)} \int_{\check{\mathcal{N}}_0\times \check{\mathcal{N}}_0} c \dd \pi,
\]
and then taking any $\pi^\star$ that is an optimal coupling for the problem involving the cost $c$, we obtain 
\begin{align*}
\operatorname{OT}(\mu, \nu, \tilde{c})  - \operatorname{OT}(\mu, \nu, {c}) & \leq \int_{\check{\mathcal{N}}_0\times \check{\mathcal{N}}_0} \tilde{c} \dd \pi^\star -  \underbrace{\inf_{\pi \in \Pi(\mu, \nu)} \int_{\check{\mathcal{N}}_0\times \check{\mathcal{N}}_0} c \dd \pi}_{= \int_{\check{\mathcal{N}}_0\times \check{\mathcal{N}}_0} c \dd \pi^\star} \\
& = \int_{\check{\mathcal{N}}_0\times \check{\mathcal{N}}_0} (\tilde{c}-c)\ \mathrm{d}\pi^\star \\
& \le  \int_{\check{\mathcal{N}}_0\times \check{\mathcal{N}}_0} \vert\tilde{c}-c\vert\ \mathrm{d}\pi^\star.
\end{align*}
So that, because of the arbitrariness of $\pi^\star\in \mathcal{O}^\star$, 
\[
\operatorname{OT}(\mu, \nu, \tilde{c})  - \operatorname{OT}(\mu, \nu, {c})  \le  \inf_{\pi \in \mathcal{O}^\star}\int_{\check{\mathcal{N}}_0\times \check{\mathcal{N}}_0} \vert\tilde{c}-c\vert\ \mathrm{d}\pi.
\]
For the other direction, apply the same idea, picking a coupling $\tilde{\pi}^\star \in \widetilde{\mathcal{O}}^\star$, to get 
\[
\operatorname{OT}(\mu, \nu, c )  - \operatorname{OT}(\mu, \nu, \tilde{c})  \le  \inf_{\tilde{\pi} \in \widetilde{\mathcal{O}}^\star}\int_{\check{\mathcal{N}}_0\times \check{\mathcal{N}}_0} \vert\tilde{c}-c\vert\ \mathrm{d}\tilde{\pi}.
\]
The first part of the claim follows. For the second part, note that $\|c -\tilde{c}\|_\infty \geq \int |c-\tilde{c}| \dd \pi$, as $\int  \dd \pi
=1$.\end{proof}

\section{Numerical illustrations}

In this final section, we exhibit (estimated) trajectories of a shortest path metric constrained to stay within the network. Based on a uniform grid in the network, we construct a graph linking each grid point (or pixel for the later representation) to its 8 surrounding nearest neighbors and itself with the following squared distance matrix 
\[
\begin{pmatrix}
2 &1 &2 \\1 & 0 &1\\ 2& 1 &2
\end{pmatrix}.
\]
Then, given random points on the structure, we compute the cost matrix based on the shortest path between each pair of points (obtained using Dijkstra's algorithm) and solve a linear assignment problem. \\

We finally represent the trajectories between the matched points in Figure~\ref{fig:MoveInThePipe} below. We clearly see the fact that in this example geodesics branch and also collapse. This explains why the standard proofs of uniqueness of transport plans do not apply \textemdash recall Section~\ref{sec: PropCost}  \textemdash and why the construction in Section~\ref{sec: DynUnderst} would be required to derive a potential existence of a unique transport plan. 

\begin{figure}[h!]
    \centering
    \includegraphics[width=1\linewidth]{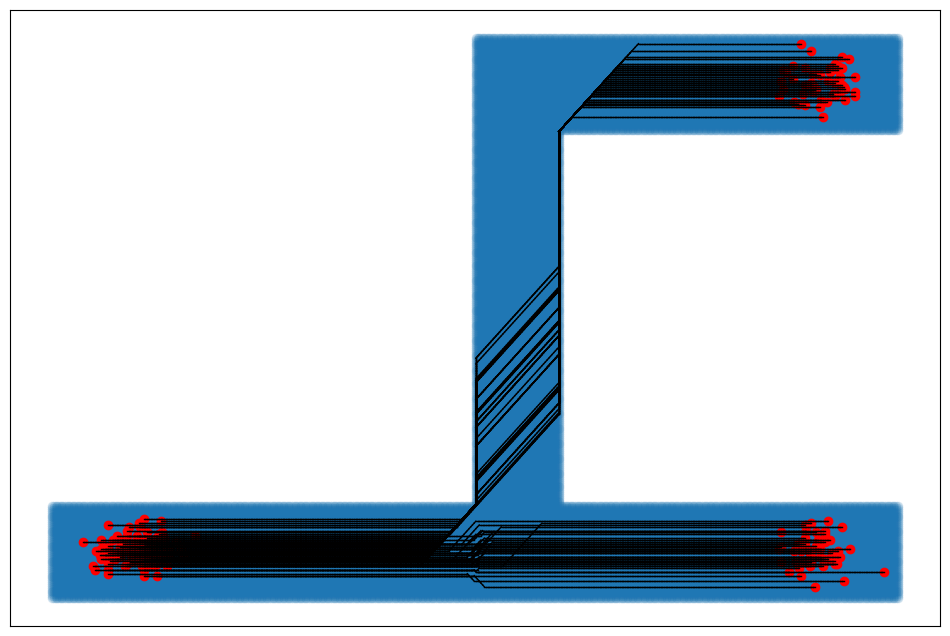}
    \caption{\label{fig:MoveInThePipe}Example of estimated trajectories based on a space-discretized version of the problem. The source measure is the set of points in the lower left corner, while the target measure are the two accumulations of points at the right hand side.}
\end{figure}

\newpage


\begin{acknowledgement}
JFP thanks the DFG for support via the Research Unit FOR 5387 POPULAR, Project No. 461909888.
AF and MB thank the DFG for support via the SFB TRR 154 and acknowledge support from DESY (Hamburg, Germany), a member of the Helmholtz Association HGF.
\end{acknowledgement}
\ethics{Competing Interests}{none.}

\eject

%
%


\begin{thebibliography}{99.}%
%

\bibitem{ambrosio2005gradient}
Ambrosio, L., Gigli, N., Savar{\'e}, G.: Gradient Flows: In Metric Spaces and in the Space of Probability Measures. Springer (2005)

\bibitem{Arjovsky2017}
Arjovsky, M., Chintala, S., Bottou, L.: Wasserstein Generative Adversarial Networks. Proceedings of the 34th International Conference on Machine Learning (ICML) (2017) doi: 10.5555/3305381.3305404

\bibitem{benamou2000computational}
Benamou, J.D., Brenier, Y.: A computational fluid mechanics solution to the Monge–Kantorovich mass transfer problem. Numerische Mathematik, \textbf{84}(3), 375--393 (2000) doi:10.1007/s002110050002

\bibitem{burger2023dynamic}
Burger, M., Humpert, I., Pietschmann, J.F.: Dynamic optimal transport on networks. ESAIM: Control, Optimisation and Calculus of Variations, \textbf{29} (2023) doi: 10.48550/arXiv.2101.03415

\bibitem{CARDALIAGUET201143}
Cardaliaguet, P., Jimenez, C.: Optimal transport with convex obstacle. Journal of Mathematical Analysis and Applications, \textbf{381}(1), 43--63 (2011) doi: 10.1016/j.jmaa.2011.04.007

\bibitem{CarioniKrautzPietschmann2025_PreferentialPaths}
Carioni, M., Krautz, J., Pietschmann, J.F.: Dynamic Optimal Transport with Optimal Preferential Paths. arXiv preprint arXiv:2504.03285, (2025) doi: 10.48550/arXiv.2504.03285

\bibitem{CarioniKrautzPietschmann2025_StarShapedGraphs}
Carioni, M., Krautz, J., Pietschmann, J.F.: Dynamic Optimal Transport with Optimal Star Shaped Graphs. arXiv preprint arXiv:2506.15007, (2025) doi: 10.48550/arXiv.2506.15007

\bibitem{cavalletti2012optimal}
Cavalletti, F.: Optimal transport with branching distance costs and the obstacle problem. SIAM Journal on Mathematical Analysis, \textbf{44}(1), 454--482 (2012) doi: 10.1137/100801433

\bibitem{chizat2020lecture}
Chizat, L.: Lecture 3: Wasserstein Space. (2020) \url{https://lchizat.github.io/files2020ot/lecture3.pdf}

\bibitem{Cuturi2013}
Cuturi, M.: Sinkhorn Distances: Lightspeed Computation of Optimal Transport. Advances in Neural Information Processing Systems (NeurIPS), \textbf{26} (2013) doi:10.48550/arXiv.1306.0895

\bibitem{Erb14}
Erbar, M.: Gradient flows of the entropy for jump processes. Annales de l'Institut Henri Poincar{\'{e}}, Probabilit{\'{e}}s et Statistiques, \textbf{50}(3), 920 -- 945 (2014) doi: 10.1214/12-AIHP537

\bibitem{erbar2021gradient}
Erbar, M., Forkert, D., Maas, J., Mugnolo, D.: Gradient flow formulation of diffusion equations in the Wasserstein space over a metric graph. arXiv preprint arXiv:2105.05677 (2021) doi: 10.48550/arXiv.2105.05677

\bibitem{Fazeny_2025}
Fazeny, A., Burger, M., Pietschmann, J.F.: Optimal transport on gas networks. European Journal of Applied Mathematics, 1--33, Apr. (2025) doi: 10.1017/S0956792525000051

\bibitem{Frogner2015}
Frogner, C., Zhang, C., Mobahi, H., Araya, M., Poggio, T.: Learning with a Wasserstein Loss. Advances in Neural Information Processing Systems (NeurIPS), \textbf{28} (2015) doi: 10.48550/arXiv.1506.05439

\bibitem{JordanKinderlehrerOtto1997}
Jordan, R., Kinderlehrer, D., Otto, F.: Free Energy and the Fokker–Planck Equation. Physica D: Nonlinear Phenomena, \textbf{107}(2–4), 265--271 (1997) doi:  10.1016/S0167-2789(97)00093-6

\bibitem{JordanKinderlehrerOtto1998}
Jordan, R., Kinderlehrer, D., Otto, F.: The Variational Formulation of the Fokker–Planck Equation. SIAM Journal on Mathematical Analysis, \textbf{29}(1), 1--17 (1998) doi: 10.1137/S0036141096303359

\bibitem{Hallin2020MultivariateGT}
Hallin, M., Mordant, G., Segers, J.: Multivariate goodness-of-fit tests based on Wasserstein distance. Electronic Journal of Statistics (2020) doi: 10.1214/21-EJS1816


\bibitem{Heinze2024}
Heinze, G., Pietschmann, J.F., Schlichting, A.: Gradient flows on metric graphs with reservoirs: Microscopic derivation and multiscale limits. arXiv preprint \href{https://arxiv.org/abs/2412.16775}{arXiv:2412.16775} (2024) doi: 10.48550/arXiv.2412.16775

\bibitem{Kantorovich1942}
Kantorovich, L.V.: On the translocation of masses. Management Science, \textbf{5}(1), 1–-4 (1958) doi: 10.1007/s10958-006-0049-2

\bibitem{Kolouri2017}
Kolouri, S., Park,  S.R., Thorpe, M., Slepčev, D., Rohde, G.K.: Optimal Mass Transport: Signal Processing and Machine-Learning Applications. IEEE Signal Processing Magazine, \textbf{34}(4), 43-–59 (2017) doi: 10.1109/MSP.2017.2695801

\bibitem{lavenant2021towards}
Lavenant, H., Zhang, S., Kim, Y.H., Schiebinger, G.: Towards a mathematical theory of trajectory inference. arXiv preprint arXiv:2102.09204 (2021) doi: 10.1214/23-AAP1969

\bibitem{LieroMielkePeletierRenger2017}
Liero, M., Mielke, A., Peletier, M.A., Renger, D.R.M.: On microscopic origins of generalized gradient structures. Discrete and Continuous Dynamical Systems-Series S, \textbf{10}(1):1 (2017) doi:10.3934/dcdss.2017001

\bibitem{lisini2007characterization}
Lisini, S.: Characterization of absolutely continuous curves in Wasserstein spaces. Calculus of variations and partial differential equations, \textbf{28}(1), 85--120 (2007) doi: 10.1007/s00526-006-0032-2

\bibitem{Mielke2023}
Mielke, A.: An Introduction to the Analysis of Gradient Systems. Lecture Notes, WIAS Preprint 3022, Berlin (2023) 10.48550/arXiv.2306.05026

\bibitem{MielkePeletierRenger2014}
Mielke, A., Renger, D.R.M., Peletier, M.A.: On the Relation Between Gradient Flows and the Large–Deviation Principle, with Applications to Markov Chains and Diffusion. Potential Analysis, \textbf{41}(4), 1293--1327 (2014) doi: 10.1007/s11118-014-9418-5

\bibitem{Monge1781}
Monge, G.: Mémoire sur la théorie des déblais et des remblais. Histoire de l'Académie Royale des Sciences de Paris, 666–-704 (1781)

\bibitem{monsaingeon2021new}
Monsaingeon, L.: A new transportation distance with bulk/interface interactions and flux penalization. Calculus of Variations and Partial Differential Equations, \textbf{60}(3) (2021) doi:10.1007/s00526-021-01946-2

\bibitem{mugnolo2019actually}
Mugnolo, D.: What is actually a metric graph?. arXiv preprint arXiv:1912.07549 (2019) doi: 10.48550/arXiv.1912.07549

\bibitem{Otto2001}
Otto, F.: The Geometry of Dissipative Evolution Equations: The Porous Medium Equation. Communications in Partial Differential Equations, \textbf{26}(1–2), 101--174 (2001) doi: 10.1081/PDE-100002243

\bibitem{Panaretos2019}
Panaretos, V.M., Zemel, Y.: Statistical Aspects of Wasserstein Distances. Annual Review of Statistics and Its Application, \textbf{6}, 405–-431 (2019) doi: 10.1146/annurev-statistics-030718-104938

\bibitem{PeyreCuturi2019}
Peyré, G., Cuturi, M.: Computational Optimal Transport: With Applications to Data Science. Foundations and Trends in Machine Learning, \textbf{11}(5–6), 355–-607 (2019) doi: 10.1561/2200000073

\bibitem{pooladian2024neural}
Pooladian, A.A., Domingo-Enrich, C., Chen, R.T.Q., Amos, B.: Neural optimal transport with Lagrangian costs. arXiv preprint arXiv:2406.00288 (2024) doi: 10.48550/arXiv.2406.00288

\bibitem{Santambrogio2015}
Santambrogio, F.: Optimal Transport for Applied Mathematicians. Birkhäuser (2015) doi: 10.1007/978-3-319-20828-2

\bibitem{villani2008optimal}
Villani, C.: Optimal Transport: Old and New. Springer, \textbf{338} (2008) doi: 10.1007/978-3-540-71050-9

\bibitem{weigand2025pdiffusion}
Weigand, L., Fazeny, A., Burger, M.: p-Diffustion equations on Graphs. In preparation (2025+)

\bibitem{zhang2023manifold}
Zhang, S., Mordant, G., Matsumoto, T., Schiebinger, G.: Manifold learning with sparse regularised optimal transport.arXiv preprint arXiv:2307.09816 (2023) doi: 10.48550/arXiv.2307.09816

\vspace{2cm}





%
%
%
%
%


\end{thebibliography}
\end{document}